\newtheorem{theorem}{Theorem}
\newtheorem{corollary}{Corollary}
\newtheorem{definition}{Definition}
\newtheorem{example}{Example}
\newtheorem{lemma}{Lemma}
\newtheorem{proposition}{Proposition}
\newtheorem{remark}{Remark}
\numberwithin{equation}{section}
\newcommand{\dd}{\partial \bar{\partial} }
\title[ALE log Calabi-Yau metrics]{ALE Calabi-Yau metrics with conical singularities along a compact divisor}
\author{Martin de Borbon}
\author{Cristiano Spotti}
\begin{document}

\maketitle

\begin{abstract}
We construct ALE Calabi-Yau metrics with cone singularities along the exceptional set of resolutions of \( \mathbb{C}^n / \Gamma \) with non-positive discrepancies. In particular, this includes the case of the minimal resolution of  two dimensional quotient singularities for \emph{any} finite subgroup $\Gamma \subset U(2)$ acting freely on the three-sphere, hence generalizing Kronheimer's construction of smooth ALE gravitational instantons. Finally, we show how our results extend to the more general asymptotically conical setting.
\end{abstract}

\section{Introduction} \label{introsect}

Let \( \Gamma \subset U(n) \) be a finite subgroup of unitary linear transformations of \(\mathbb{C}^n\) acting freely on the unit sphere. The space of orbits is an affine variety \( \mathbb{C}^n / \Gamma = \mbox{Spec} \left( \mathbb{C}[u_1, \ldots, u_n]^{\Gamma} \right) \) with an isolated singularity at \(o\). Let \( \pi : X \to \mathbb{C}^n / \Gamma \) be a resolution with simple normal crossing exceptional divisor \( \pi^{-1}(o) = E = \cup_{j=1}^N E_j \). 
Assume that there are \( 0 < \beta_j < 1 \) (necessary rational numbers) such that
\begin{equation} \label{angle condition}
	K_X  = \sum_{j=1}^{N} (\beta_j -1) E_j . 
\end{equation}
In other words, we require the discrepancies of the resolution \( a_j = \beta_j -1 \) to be negative. The condition that \(a_j>-1\) (equivalently, \(\beta_j>0\)) is automatic from the general fact that quotient singularities are klt. 

We interpret equation \eqref{angle condition} as saying that  \( \Omega = \pi^* \Omega_0 \)  extends locally over \(E\) with \((\beta_j -1)\)-poles along \(E_j\),
where \(\Omega_0\) is the  multivalued holomorphic volume form \( du_1 \wedge \ldots \wedge du_n \) on \( (\mathbb{C}^n/ \Gamma)^{reg} \). We
denote by \(g_C\) the flat K\"ahler metric on the quotient \( \mathbb{C}^n / \Gamma \). This is a Riemannian cone over the space form \( S^{2n-1} / \Gamma \) and its K\"ahler form is \(\omega_C = (i/2) \dd r^2 \), where \(r = ( \sum_{i=1}^{n} |u_i|^2 )^{1/2} \) measures the distance to its apex (located at \(o\)). Moreover, the real volume form \( i^{n^2} \Omega_0 \wedge \overline{\Omega}_0 \) is globally defined on \((\mathbb{C}^n/ \Gamma)^{reg} \) and, up to a positive dimensional constant factor,   \( \omega_C^n = i^{n^2} \Omega_0 \wedge \overline{\Omega_0} \).  

We are interested in metrics on \(X\) which are asymptotic to \(g_C\) and solve the Ricci-flat equation \( \omega_{RF}^n = i^{n^2} \Omega \wedge \overline{\Omega} \). Their existence is our main result.

\begin{theorem} \label{theorem}
	Let \( \pi : X \to \mathbb{C}^n / \Gamma \) be a resolution that satisfies equation \eqref{angle condition}. Then for 
	each K\"ahler class in \(H^{2}(X,\mathbb{R})\) there is a unique Ricci-flat K\"ahler metric \(g_{RF}\) with cone angle \(2\pi\beta_j\) along \(E_j\) for \(j= 1, \ldots, N\) such that
	\begin{equation}
		| \nabla^k_{g_C} ( (\pi^{-1})^* g_{RF} - g_C ) |_{g_C} = O (r^{-2n - k})
	\end{equation}
	for  all \(k \geq 0\). 
\end{theorem}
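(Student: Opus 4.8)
The plan is to phrase the existence statement as a complex Monge--Ampère equation and to solve it by the continuity method in function spaces that simultaneously encode the conical behaviour along \(E\) and the asymptotically conical decay at infinity. First I would fix the prescribed Kähler class and construct within it a reference metric \(\omega\) that has cone angle \(2\pi\beta_j\) along \(E_j\) and that agrees with \(\pi^*\omega_C\) outside a compact neighbourhood of \(E\): near a smooth point of \(E_j\) one models \(\omega\) on the standard conical Kähler form, whose potential grows like \(|z_j|^{2\beta_j}\) in the normal coordinate \(z_j\), while away from \(E\) one takes \(\pi^*\omega_C\) and interpolates with a cut-off. The hypothesis \(K_X = \sum (\beta_j-1)E_j\) is precisely what makes \(\Omega = \pi^*\Omega_0\) acquire \((\beta_j-1)\)-poles matched to these cone angles, so that writing \(i^{n^2}\Omega\wedge\overline{\Omega} = e^{F}\,\omega^n\) produces an \(F\) of conical Hölder class near \(E\) and, crucially, of rapid decay at infinity: since \(\omega_C^n = i^{n^2}\Omega_0\wedge\overline{\Omega}_0\) up to the fixed constant and \(\Omega = \pi^*\Omega_0\), the function \(F\) is supported, modulo fast-decaying error, on the compact set where \(\omega\) differs from \(\pi^*\omega_C\).

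The equation to solve is then
\[
	(\omega + i\,\dd \phi)^n = e^{F}\,\omega^n ,
\]
and I would work in a doubly weighted Hölder space: near \(E\) the Donaldson-type conical Hölder spaces adapted to the edge structure of the divisors, and at infinity polynomially weighted spaces \(r^{\delta}C^{2,\alpha}\) measuring decay against powers of \(r\). Running the continuity path \((\omega + i\,\dd\phi_t)^n = e^{tF}\,\omega^n\), \(t\in[0,1]\), connects the reference metric at \(t=0\) to the desired solution at \(t=1\). \emph{Openness} follows from the implicit function theorem once the linearisation, the Laplacian \(\Delta_{\omega_t}\), is an isomorphism between the weighted spaces; this reduces to the Fredholm and injectivity theory of the conical--ALE Laplacian, for which the weight \(\delta\) must avoid the indicial roots of the cone \(\mathbb{C}^n/\Gamma\) while the conical edge operators contribute no cokernel in the admissible range. \emph{Closedness} is delivered by the a priori estimates.

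These a priori estimates are the core of the argument, and I expect the combined conical--ALE \(C^0\) bound to be the main obstacle. At infinity one controls \(\phi\) by comparison with barrier functions built from the Green function of the cone (of order \(r^{-(2n-2)}\)) together with a maximum principle, whereas near \(E\) one rules out blow-up along the divisor by pluripotential/Kolodziej-type bounds valid for conical metrics. The second-order estimate follows from the usual \(\Delta_\omega \log \operatorname{tr}_\omega \omega_\phi\) computation, but carried out in the conical geometry using that the bisectional curvature of the model cone is controlled; higher regularity then comes from the conical Evans--Krylov and Schauder theory developed for conical Kähler--Einstein metrics. Feeding these back closes the continuity argument and produces at \(t=1\) a solution \(\phi\) of conical regularity with some polynomial decay.

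Finally, to sharpen the decay to \(O(r^{-2n-k})\) I would carry out an indicial/bootstrap analysis at infinity. Because the two holomorphic volume forms match exactly, the source decays faster than any fixed power once one leaves the compact exceptional region, so \(\phi\) is asymptotically harmonic on the cone and its rate is governed by the first indicial root below zero, namely \(-(2n-2)\); the coefficient of this monopole term is a ``mass''-type integral fixed by the geometry, so generically \(\phi = O(r^{-(2n-2)})\). Since \(i\,\dd\phi\) decays two orders faster, the metric perturbation \((\pi^{-1})^*g_{RF} - g_C\) is \(O(r^{-2n})\), and the derivative bounds \(O(r^{-2n-k})\) follow by applying the weighted Schauder estimates to the now rapidly decaying equation. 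Uniqueness within the class is then the standard Monge--Ampère comparison: two solutions in the same class with the same volume form have potentials whose difference decays at infinity, and the maximum principle forces it to vanish.
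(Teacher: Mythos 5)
Your overall strategy---a conical reference metric equal to \(\pi^*\omega_C\) at infinity, a continuity path for the Monge--Amp\`ere equation in doubly weighted (conical \(+\) ALE) H\"older spaces, and an indicial-root analysis giving the \(r^{2-2n}\) leading term and hence the \(O(r^{-2n})\) decay of the metric---is the same as the paper's. But there is a genuine gap in the a priori second-order estimate. Along your path \((\omega+i\dd\phi_t)^n=e^{tF}\omega^n\) one has \(\mbox{Ric}(\omega_{\phi_t})=(1-t)\,i\dd F\), where \(F\) is the Ricci potential of the conical reference metric. Near \(E\) this \(F\) is only H\"older continuous, and whenever some \(\beta_j>1/2\) the form \(i\dd F\) (equivalently \(\mbox{Ric}(\omega_{ref})\)) is \emph{unbounded below}; moreover the bisectional curvature of a conical reference metric admits an upper bound but no lower bound. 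Consequently neither the Chern--Lu inequality for \(\mbox{tr}_{\omega_{\phi_t}}\omega\) (which needs \(\mbox{Ric}(\omega_{\phi_t})\ge -C\omega\)) nor the Yau--Aubin computation of \(\Delta\log\mbox{tr}_{\omega}\omega_{\phi_t}\) that you invoke (which needs a \emph{lower} bound on \(\mbox{Bisec}(\omega)\)) closes as stated. The paper resolves this by inserting an extra step before the continuity method: using the linear isomorphism \(\triangle_g:C^{2,\alpha}_{\mu+2}\to C^{\alpha}_{\mu}\) and the implicit function theorem, the volume form is first perturbed so that the initial metric \(\omega_0\) satisfies \(\omega_0^n=e^{-f_0}\,i^{n^2}\Omega\wedge\overline{\Omega}\) with \(f_0\) compactly supported and \emph{smooth in complex coordinates}; then \(\mbox{Ric}(\omega_t)=(1-t)i\dd f_0\ge -C_1\omega_{ref}\) uniformly, and the Chern--Lu inequality applies against the upper bound \(\mbox{Bisec}(\omega_{ref})\le C_2\). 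Your proposal needs this (or an equivalent device) to make the \(C^2\) estimate correct.

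Two smaller points of divergence. For the \(C^0\) bound the paper does not use barriers or pluripotential theory: it pushes the conical metric forward by the cone-coordinate diffeomorphism to a quasi-isometric smooth ALE manifold, obtains the Sobolev inequality from there, and runs Moser iteration exactly as in Joyce; your barrier/Kolodziej route is plausible but would need to be carried out, since the maximum principle alone does not control \(\|\phi\|_{C^0}\) on a noncompact space without a subsolution adapted to both the conical locus and the end. For higher regularity, what is actually needed (and what the paper cites) are the Guo--Song interior Schauder and Evans--Krylov-type estimates for the flat model \(g_{(\beta)}\) with \emph{simple normal crossing} conical singularities, not just the smooth-divisor Donaldson theory; this is the reason for the restriction \(0<\alpha<\min_j\beta_j^{-1}-1\) on the H\"older exponent, which your weighted spaces should incorporate.
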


Theorem \ref{theorem} can be thought of as a logarithmic version of a result due to Joyce, see \cite[Chapter 8]{Joyce}. In particular, it implies that we can construct conically singular ALE Calabi-Yau metrics on four-manifolds with infinity modelled on flat $\mathbb{C}^2/\Gamma$, where $\Gamma \subset U(2)$ is \emph{any} finite subgroup of unitary matrices acting freely on the three-sphere, see Corollary \ref{cor1}. This can be seen as an extension of the well-known Kronheimer's ALE hyperk\"ahler spaces \cite{Kronheimer} constructed on the crepant minimal resolutions of the Du Val canonical singularities $\mathbb{C}^2/\Gamma$, where $\Gamma \subset SU(2)$.

This article is organized as follows. In Section \ref{prelimsect} we collect together some analytic preliminaries and we recall the recent interior Schauder estimates of Guo and Song \cite{GuoSongII}, which extend Donaldson's work \cite{Donaldson} on K\"ahler metrics with cone singularities along a smooth divisor to the simple normal crossing situation. In Section \ref{proofsect} we prove Theorem \ref{theorem} by using Yau's continuity path. We argue as in Joyce's work on the smooth ALE Calabi conjecture (\cite[Chapter 8]{Joyce}), incorporating methods from the compact logarithmic situation (see \cite{deBorbon2}).  In Section \ref{examplessect} we provide examples to which Theorem \ref{theorem} applies. We discuss the complex surface case and also toric resolutions of cyclic quotient singularities in higher dimensions. In Section \ref{gensect} we extend Theorem \ref{theorem} to the more general asymptotically conical setting. We consider resolutions of Calabi-Yau cones with non-positive discrepancies, following van Coevering \cite{vanCoevering} and  Goto \cite{Goto}.  We also provide, along the lines of Conlon-Hein \cite{ConlonHeinII}, a Tian-Yau type theorem that takes into account resolutions (with non-positive discrepancies) of partial smoothings of quasi-regular Calabi-Yau cones. Finally, in Section \ref{contextsect}, we explore plausible applications of our results to conical cscK metrics and glueing constructions.

\subsection*{Acknowledgments}

We would like to thank Ronan Conlon for his valuable comments and suggestions on preliminary drafts of our paper. Both authors are supported by AUFF Starting Grant 24285 and DNRF Grant DNRF95. CS is additionally supported by Villum Young Investigator Grant 0019098. 

\section{Analytic preliminaries} \label{prelimsect}

\subsection{Interior estimates for the flat local model metric} \label{locmodsect}
We work on \( \mathbb{C}^n \) with complex coordinates \( (z_1, \ldots, z_n) \). Let \( 1 \leq d \leq n \) and \( (\beta) = (\beta_1, \ldots, \beta_d) \) with \( 0 < \beta_j < 1 \) for all \(j\). We consider the local model metric 
\begin{equation} \label{FlatMetric}
g_{(\beta)} = \sum_{j=1}^{d} \beta_j^2 |z_j|^{2\beta_j-2} |dz_j|^2 + \sum_{j=d+1}^n |dz_j|^2,
\end{equation}
which has cone singularities of total angle \(2\pi\beta_j\) along \(\{z_j=0\}\). It induces a distance $d_{(\beta)}$ and therefore, for each \( \alpha \in (0, 1) \), a H\"older  semi-norm
\begin{equation*} \label{Holder seminorm}
[u]_{\alpha} = \sup_{x, y} \frac{|u(x) - u(y)|}{d_{(\beta)}(x, y)^{\alpha}} 
\end{equation*}
on continuous functions defined on domains of \( \mathbb{C}^n \). The cone coordinates are defined by
\begin{equation*}
	(r_1 e^{i\theta_1}, \ldots, r_d e^{i\theta_d}, z_{d+1}, \ldots, z_n ) \hspace{3mm} \mbox{with} \hspace{3mm} z_j= r_j^{1/\beta_j} e^{i\theta_j} .
\end{equation*}
It is easy to check that   \(g_{(\beta)} = \sum_{j=1}^{d} dr_j^2 + \beta_j^2 r_j^2 d\theta_j^2 + \sum_{j=d+1}^{n} |dz_j|^2\). As a consequence, our local model is quasi-isometric to the Euclidean metric and the H\"older semi-norm is equivalent to the standard one in cone coordinates.

We define next H\"older continuous differential forms. For \(j=1, \ldots, d\) we  set $\epsilon_j = dr_j + i \beta_j r_j d\theta_j$. A $(1,0)$-form $\eta$ is  $C^{\alpha}$  if $\eta= \sum_{j=1}^{d} u_j \epsilon_j + \sum_{j=d+1}^n u_j dz_j$ with $u_j$ $C^{\alpha}$ functions; we also require that $u_j  =0$  along  $\lbrace z_j = 0 \rbrace$ for \(j=1, \ldots, d\). For \((1, 1)\)-forms $\eta$, we use the basis $\lbrace \epsilon_i \bar{\epsilon}_j, \epsilon_j d\bar{z_k}, dz_k\bar{\epsilon}_j, dz_k d\bar{z}_l \rbrace$ with \( i, j=1, \ldots, d \) and \(k, l = d+1, \ldots, n\). We say that $\eta$ is $C^{\alpha}$ if its components are $C^{\alpha}$ functions and we also require the components  corresponding to $\epsilon_j d\bar{z}_k,  dz_k \bar{\epsilon}_j$ to vanish along \( \{z_j =0\} \).  
Finally, we set \(C^{2, \alpha}\) to be the space of \(C^{\alpha}\) (real-valued) functions $u$ such that \(\partial u\) and \(i \dd u\) are $C^{\alpha}$. 

We define $L^2_1$ on domains of $\mathbb{C}^n$  by means of the usual norm $ \| u \|_{L^2_1} = \int |\nabla^{(\beta)} u|^2 + \int |u|^2$. In cone coordinates, \(L^2_1\) coincides with the standard Sobolev space. Let $u$ be a function that is locally in $L^2_1$, we say that $u$ is a weak solution of $\triangle_{(\beta)} u =f$ if
$$ \int \langle\nabla^{(\beta)} u, \nabla^{(\beta)} \phi \rangle = - \int f\phi$$ 
for every smooth compactly supported test function $\phi$.

We now recall the interior Schauder's estimate in the normal crossing case proved by Guo and Song.

\begin{theorem}{[\cite{GuoSongII}]} \label{intSchthm} 
	Let $0< \alpha < \beta_j^{-1} -1$ for \(j=1, \ldots, d\) and let $u$ be a weak solution of $\triangle_{(\beta)} u = f$ on $B_2$ with $f \in C^{\alpha}(B_2)$. Then $u \in C^{2, \alpha}(B_1)$ and there is a constant $C>0$ independent of $u$ such that 
	\begin{equation} 
	\|u\|_{C^{2, \alpha}(B_1)} \leq C \left( \|f\|_{C^{ \alpha}(B_2)} + \|u\|_{C^0(B_2)} \right) .
	\end{equation}
\end{theorem}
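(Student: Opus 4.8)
The proof rests on the observation that in the cone coordinates the model metric $g_{(\beta)}$ is a Riemannian product of $d$ two-dimensional flat cones $C_{\beta_j}$ of angle $2\pi\beta_j$ and a Euclidean factor $\mathbb{C}^{n-d}$, so that the Laplacian splits as $\triangle_{(\beta)} = \sum_{j=1}^{d}\triangle_{C_{\beta_j}} + \triangle_{\mathbb{C}^{n-d}}$. The plan is to reduce the statement to an a priori estimate and then prove that estimate by a blow-up/compactness argument, in the spirit of Simon and of Donaldson's treatment of a single smooth divisor. The regularity part (that a weak $L^2_1$ solution actually lies in $C^{2,\alpha}$) I would obtain by approximation: solve the Dirichlet problem on $B_2$ with mollified data, apply the a priori estimate to get uniform $C^{2,\alpha}(B_1)$ bounds, extract a convergent subsequence, and identify the limit with $u$ using uniqueness of weak solutions. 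Thus the whole content is the a priori bound
\[
\|u\|_{C^{2,\alpha}(B_1)}\le C\left(\|f\|_{C^{\alpha}(B_2)}+\|u\|_{C^0(B_2)}\right)
\]
for $u\in C^{2,\alpha}$.

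For this bound I would argue by contradiction: if it failed there would be sequences $u_i$ with $f_i=\triangle_{(\beta)}u_i$, $\|u_i\|_{C^{2,\alpha}(B_1)}=1$ but $\|f_i\|_{C^{\alpha}(B_2)}+\|u_i\|_{C^0(B_2)}\to 0$. Since the $C^0$ norm, the lower derivative norms, and the seminorm of $\partial u_i$ are controlled by interpolation, the offending quantity must be the Hölder seminorm of the Hessian $i\dd u_i$. I would then locate near-extremal pairs of points, blow up using the exact scale invariance of the cone metric, and recenter. Passing to a limit gives a function $w$ on a model space -- depending on where the rescaling concentrates, the full product cone, a product of fewer cone factors with a Euclidean factor, or plain $\mathbb{C}^n$ -- solving $\triangle_{(\beta)}w=\mathrm{const}$, with polynomial growth $|w|\le C(1+\rho^{2+\alpha})$ in the distance $\rho$ to the vertex and with unit Hessian seminorm $[i\dd w]_\alpha=1$.

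The heart of the matter is a Liouville-type theorem: any such $w$ has \emph{constant} Hessian, contradicting $[i\dd w]_\alpha=1$. I would prove it by separating variables on the link and expanding $w$ into pieces homogeneous under the cone dilation. A degree-$\nu$ piece of $w$ produces a piece of $i\dd w$ homogeneous of degree $\nu-2$, whose global $C^\alpha$ seminorm is finite only if $\nu-2=0$ (constant Hessian) or $\nu-2=\alpha$; the growth bound discards all $\nu>2+\alpha$, so it suffices to show that no harmonic function of degree $\nu\in(2,2+\alpha]$ has nonzero Hessian. On a single factor $C_{\beta_j}$ the homogeneous harmonics are the pluriharmonic monomials $z_j^{m},\bar z_j^{m}$ of degree $m/\beta_j$, which carry no Hessian, together with the quadratic $r_j^2=|z_j|^{2\beta_j}$ whose Hessian $i\dd r_j^2=2i\,\epsilon_j\wedge\bar\epsilon_j$ is constant in the unitary coframe. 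The only non-pluriharmonic harmonics of low degree are the cross terms $z_j\bar z_k$ (with $j\le d$ and $k\ne j$), whose Hessian $i\,dz_j\wedge d\bar z_k$ has, in the unitary coframe, a coefficient of size $r_j^{\beta_j^{-1}-1}$ (times $r_k^{\beta_k^{-1}-1}$ if also $k\le d$), of homogeneity $\beta_j^{-1}-1>0$. The hypothesis $\alpha<\beta_j^{-1}-1$ says precisely that this homogeneity exceeds $\alpha$, so the finite-seminorm constraint forces every such cross term -- and hence every non-constant contribution to $i\dd w$ -- to vanish. This yields the contradiction, exactly recovering Donaldson's condition in the case $d=1$.

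I expect the main obstacle to be the rigorous implementation of this model analysis in the genuinely normal-crossing regime: justifying the compactness and convergence of the blow-up over a singular limit space, and carrying out the spectral decomposition on the singular link (a metric join of circles of circumference $2\pi\beta_j$ with a round sphere), together with the bookkeeping of homogeneous harmonics and of the coframe components $\epsilon_j\wedge\bar\epsilon_{j'}$ along the deepest strata where several divisors meet. A secondary technical point is to make the vanishing conditions built into the definition of the $C^\alpha$ forms compatible with the blow-up limits and with the product structure of the model; once these are in place, combining standard interior elliptic theory away from the divisor with the model analysis yields the estimate on $B_1$.
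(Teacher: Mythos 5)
First, a point of order: the paper does not prove Theorem \ref{intSchthm} at all --- it is imported verbatim from Guo--Song \cite{GuoSongII} (with the smooth-divisor case going back to Donaldson \cite{Donaldson}), so there is no in-paper argument to measure your proposal against. Judged on its own terms, your strategy is a recognized alternative route to such estimates: it is L.~Simon's scaling proof of Schauder transplanted to the model cone, and your numerology is right at the decisive point. The obstructing homogeneous harmonics are exactly the cross terms $z_j\bar z_k$; in the unitary coframe their Hessian coefficient is of size $r_j^{1/\beta_j-1}$ (times $r_k^{1/\beta_k-1}$ if also $k\le d$), and the hypothesis $\alpha<\beta_j^{-1}-1$ is precisely what pushes this homogeneity, and the degree $1/\beta_j+1>2+\alpha$ of the harmonic itself, out of the admissible range, recovering Donaldson's threshold when $d=1$. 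The cited proof of Guo--Song is, by contrast, quantitative rather than compactness-based (explicit estimates for the model operator combined with an induction on the number of crossing factors), so a completed version of your argument would constitute a genuinely different proof.

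That said, what you have written is a program rather than a proof, and the items you defer as ``main obstacles'' are the actual mathematical content. Three of them are essential. (i) The Liouville step requires knowing that every polynomial-growth harmonic function on the product cone is a convergent sum of homogeneous harmonics, i.e., the spectral theory of the Laplacian on the singular join link; moreover the list of low-degree harmonics is strictly larger than sums of products of harmonics on the factors (for instance the traceless quadratics $r_j^2-r_k^2$ and $r_j^2-|z_l|^2$ are harmonic and not of product type --- harmless here because their Hessians are parallel, but they must appear in the classification for the argument to close). (ii) The compactness of the blow-up sequence needs uniform interior $C^{2,\alpha'}$ control at every scale and at every position relative to the strata of $\{z_1\cdots z_d=0\}$; this is dangerously close to assuming the estimate being proved unless it is organized as an induction on the depth of the stratum, where points near only some of the hypersurfaces $\{z_j=0\}$ see a model with fewer cone factors for which the estimate may be taken as known. (iii) The regularity half of the statement --- that a weak $L^2_1$ solution is actually $C^{2,\alpha}$ --- requires existence and uniqueness for the Dirichlet problem for $\triangle_{(\beta)}$, which you invoke but do not supply. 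None of these is a fatal error, but each is a genuine gap that must be filled before the sketch becomes a proof.
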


We also need the interior estimates for the Monge-Amp\`ere equation:
\begin{theorem}{[\cite{GuoSongI} and \cite{GuoSongII}]} \label{IntMongAmp} 
	Let $0 < \alpha < \alpha'<\beta_j^{-1} -1$ for \(j=1, \ldots, d\) and let $ u \in C^{2, \alpha'} (B_2)$ be such that
	$$ K^{-1} \omega_{(\beta)} \leq i\partial \bar{\partial} u \leq K \omega_{(\beta)}$$
	with
	$$ \det (\partial \bar{\partial} u) =  \left( \prod_{j=1}^{d} |z_j|^{2\beta_j-2} \right) e^f .$$
	Then there exists a constant $C$, which depends only only on $K$ and the $C^{\alpha}$-norm of $f$ in $B_2$ such that
	$$ [ i\partial \bar{\partial} u ]_{\alpha, B_{1}} \leq C . $$
\end{theorem}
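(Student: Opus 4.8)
The plan is to prove this by a blow-up and contradiction argument, with the concavity of the Monge--Amp\`ere operator supplying compactness and a Liouville theorem on the model cone supplying rigidity. The starting point is that the two-sided bound $K^{-1}\omega_{(\beta)} \leq i\dd u \leq K\omega_{(\beta)}$ makes the equation uniformly elliptic relative to $g_{(\beta)}$: in the cone coordinates $(r_j,\theta_j)$, where $g_{(\beta)}$ is quasi-isometric to the Euclidean metric, it says that the complex Hessian of $u$ in the adapted frame $\{\epsilon_j, dz_k\}$ is comparable to the identity, so the linearized operator $u^{i\bar j}\partial_i\partial_{\bar j}$ has bounded, uniformly elliptic coefficients. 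I would also record the dilation symmetry of the model: the scaling $z_j \mapsto \lambda^{1/\beta_j} z_j$ on the conical directions $j=1,\dots,d$ and $z_k \mapsto \lambda z_k$ on the smooth directions multiplies $g_{(\beta)}$ by $\lambda^2$ and preserves the structure of the equation, which is what makes a rescaling argument possible.

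Suppose the estimate fails. Then there is a sequence of solutions $u_m$ on $B_2$ with uniform $K$ and uniform $\|f_m\|_{C^\alpha(B_2)}$ but with $[i\dd u_m]_{\alpha, B_1} \to \infty$. A point-selection argument \`a la Schoen--Simon picks base points and scales $\lambda_m \to 0$ that essentially realize the growing H\"older quotient, and I rescale using the dilation above, subtracting an affine term. Because $\log\det$ is concave and the equation is uniformly elliptic, an Evans--Krylov type interior estimate gives a uniform $C^{2,\bar\alpha}$ bound on compact sets for a small universal exponent $\bar\alpha$; this furnishes enough compactness to pass to a limit $u_\infty$ on the full model cone $\mathbb{C}^n$. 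Since $[f_m]_\alpha\,\lambda_m^\alpha \to 0$, the right-hand side converges to $\prod_{j}|z_j|^{2\beta_j-2}$ times a constant, so $u_\infty$ solves the model Monge--Amp\`ere equation, with $i\dd u_\infty$ still comparable to $\omega_{(\beta)}$, normalized so that its H\"older quotient equals $1$ while remaining bounded at every scale.

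The contradiction comes from a Liouville theorem: a global solution on the model cone whose complex Hessian is comparable to $\omega_{(\beta)}$ and has H\"older growth of exponent $\alpha$ must have $i\dd u_\infty$ constant, so $[i\dd u_\infty]_\alpha = 0$, against the normalization. This is exactly where the hypothesis $\alpha < \beta_j^{-1}-1$ enters and is sharp. The threshold is the leading indicial root of the model: the change from the holomorphic coframe to the H\"older coframe satisfies $dz_j = \beta_j^{-1}\, r_j^{\beta_j^{-1}-1} e^{i\theta_j}\,\epsilon_j$, so frame components generically pick up the factor $r_j^{\beta_j^{-1}-1}$, whose H\"older exponent is $\beta_j^{-1}-1$. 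Consequently, for $\alpha<\beta_j^{-1}-1$ the model admits no nonconstant Hessian of that H\"older exponent, which forces the rigidity. Reading the contradiction backwards yields the bound $[i\dd u]_{\alpha, B_1} \leq C$ with $C$ depending only on $K$ and $\|f\|_{C^\alpha(B_2)}$, as claimed; alternatively one could reach the full range of $\alpha$ by bootstrapping the small-exponent Evans--Krylov bound through the linear Schauder estimate of Theorem \ref{intSchthm}.

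The two places where the real work sits are both singular-geometry analogues of classical results. First, the compactness step requires the Krylov--Safonov and Evans--Krylov machinery --- the $L^\varepsilon$ and measure estimates behind the Harnack inequality --- to be run with respect to the distance $d_{(\beta)}$ and the degenerate volume, keeping track of the anisotropy between the directions $z_1,\dots,z_d$ and $z_{d+1},\dots,z_n$ so that the limiting H\"older continuity respects the structural conditions (vanishing of the mixed frame components along $\{z_j=0\}$) built into the definition of $C^\alpha$ forms. Second, and hardest, is the Liouville theorem on the model cone, which rests on a careful indicial-root analysis of the model linearized operator; this is precisely the nontrivial content provided by Guo and Song in \cite{GuoSongI} and \cite{GuoSongII}.
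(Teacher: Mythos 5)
First, a point of orientation: the paper does not prove this statement at all --- it is quoted verbatim as an external result of Guo and Song, and their actual argument is not a blow-up/Liouville scheme but a perturbation--iteration argument built on a quantitative analysis of the model linear operator $\triangle_{(\beta)}$ (heat-kernel/Green's function estimates for the cone metric, combined with the concavity of $\log\det$ to compare the solution at dyadic scales with solutions of the frozen-coefficient model equation, \`a la Caffarelli--Safonov). So your route is genuinely different from the one behind the citation. It is a legitimate and standard alternative strategy in this area, but as written it is a proof \emph{outline} whose two load-bearing steps are exactly the content of the theorem, and you defer both to the very references being proved; that makes the argument circular rather than complete.

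Concretely, the gaps are these. (a) The compactness step: ``Evans--Krylov gives a uniform $C^{2,\bar\alpha}$ bound'' is not available off the shelf here. Evans--Krylov rests on the Harnack inequality applied to the pure second derivatives $D_{ee}u$ along fixed directions $e$, and in the conical frame $\{\epsilon_j, dz_k\}$ these are not globally defined continuous subsolutions: the frame is singular along $\{z_j=0\}$ and only defined up to the phase $e^{i\theta_j}$, and the structural conditions in the definition of $C^{\alpha}$ forms (vanishing of mixed components along the divisor) have to be \emph{produced} by the argument, not assumed. Establishing a Krylov--Safonov theory adapted to $d_{(\beta)}$ in the normal-crossing case is precisely the hard analytic content. (b) The Liouville theorem on the model: you state it for the full normal-crossing cone, but the point-selection produces different blow-up models depending on whether the rescaled base points stay at bounded $d_{(\beta)}$-distance from each stratum $\{z_j=0\}$; you need a Liouville theorem for every partial model (including flat $\mathbb{C}^n$ and cones with fewer conical factors), plus the case analysis showing the selected points do not collapse onto or escape from the divisor in an uncontrolled way. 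None of this is routine in the normal-crossing setting, and the claim that $\alpha<\beta_j^{-1}-1$ ``forces rigidity'' is asserted from the indicial heuristic rather than derived. Finally, note that the hypothesis $u\in C^{2,\alpha'}$ with $\alpha'>\alpha$ is what makes the point-selection legitimate (the seminorm being estimated must be finite a priori); your write-up never uses it, which is a sign the selection step is not actually carried out. Until (a) and (b) are supplied, the proposal does not constitute a proof.
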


\subsection{Linear theory}
Let \( \pi : X \to \mathbb{C}^n / \Gamma \) be a resolution as in Theorem \ref{theorem}.
We will work exclusively with the following class: 

\begin{definition} \label{DefinitionMetricConeSing}
	We say that $g$ is an ALE metric with cone angle $2 \pi \beta_j$ along $E_j$, for \(j=1, \ldots, N\), if: 
	\begin{itemize}
		\item \(g\) is a smooth K\"ahler metric on \(X\setminus E\).
		\item For every $x \in E$ we can find complex coordinates $(z_1, \ldots, z_n)$ centred at $x$ in which $E = \{z_1 \ldots z_d =0\}$, with \(\{z_j=0\} = E_{i_j} \) for some \( 1 \leq i_j \leq N \) and such that \((1/C) g_{(\beta)} \leq g \leq C g_{(\beta)}\) for some \( C>0 \), where \( (\beta) = (\beta_{i_1}, \ldots, \beta_{i_d}) \). We also require the existence
		of local K\"ahler potential for \(g\) that belong to \(C^{2, \alpha}\).  
		\item  \(	| \nabla^k_{g_C} ( (\pi^{-1})^* g - g_C ) |_{g_C} = O (r^{\mu - k}) \) for some \( \mu<0 \) and every \( k \geq 0 \).
	\end{itemize}
\end{definition}

It is straightforward to show that, in the setting of Theorem \ref{theorem}, every positive class in \(H^2 (X, \mathbb{R})\) admits conical ALE metrics as in Definition \ref{DefinitionMetricConeSing}, see for instance the construction of the reference metric in Section \ref{refmetsect}.

\begin{proposition} \label{sobineqprop}
	Let \(g\) be an ALE metric with conical singularities as in Definition \ref{DefinitionMetricConeSing}, then there is \(C>0\) that depends only on \(g\) such that for any compactly supported \( u \in L^2_1(X) \) we have
	\begin{equation} \label{sobineq}
	\left(	\int_X |u|^{\frac{2n}{n-1}} dV_g \right)^{\frac{n-1}{n}} \leq C \int_X |\nabla_g u|^2 dV_g
	\end{equation}
\end{proposition}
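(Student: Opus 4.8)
The plan is to reduce \eqref{sobineq} to a Euclidean-type isoperimetric inequality on $(X \setminus E, g)$, and to verify the latter by combining the local bi-Lipschitz-to-Euclidean structure of $g$ with its ALE asymptotics. The starting observation is that both sides of \eqref{sobineq} are preserved, up to a multiplicative constant, under a bi-Lipschitz change of metric: if $(1/C) g' \le g \le C g'$, then $dV_g$ and $dV_{g'}$, and likewise $|\nabla_g u|^2\, dV_g$ and $|\nabla_{g'} u|^2\, dV_{g'}$, are comparable pointwise (since $g \le C g'$ forces $g^{-1} \le C g'^{-1}$), so it suffices to prove \eqref{sobineq} for any metric bi-Lipschitz equivalent to $g$. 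Moreover, since the exponent $2n/(n-1)$ is exactly the critical Sobolev exponent $2m/(m-2)$ in real dimension $m = 2n$, the inequality \eqref{sobineq} is implied, via the Federer--Fleming--Maz'ya theorem together with the substitution $u \mapsto |u|^{(2n-1)/(n-1)}$ and Cauchy--Schwarz, by the isoperimetric inequality
\[
\mathrm{Vol}_g(\Omega)^{\frac{2n-1}{2n}} \le C\, \mathrm{Area}_g(\partial \Omega)
\]
for all bounded domains $\Omega \Subset X \setminus E$ with smooth boundary. It is this geometric inequality that I would establish.

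Next I would record the relevant geometry in three regimes. Near $E$, the cone coordinates of Section \ref{locmodsect} write $g_{(\beta)}$ as a product in which each factor $dr_j^2 + \beta_j^2 r_j^2\, d\theta_j^2$ is bi-Lipschitz, with constant $\max(\beta_j,\beta_j^{-1})$, to the flat plane $dr_j^2 + r_j^2\, d\theta_j^2$; together with the comparison $(1/C) g_{(\beta)} \le g \le C g_{(\beta)}$ from Definition \ref{DefinitionMetricConeSing}, this makes $g$ uniformly bi-Lipschitz to Euclidean on such charts, with $E$ corresponding to a real codimension-two linear subspace. As a codimension-two set, $E$ has zero $g$-volume and vanishing $2$-capacity (a direct logarithmic cutoff computation in the normal plane), so it is removable for $L^2_1$ and plays no role in \eqref{sobineq} or in the perimeter bound. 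The compact core $\{ r \le R_0 \}$ is then covered by finitely many charts, each uniformly bi-Lipschitz to a Euclidean ball. On the end $\{ r > R_0 \}$, the decay $| (\pi^{-1})^* g - g_C |_{g_C} = O(r^{\mu})$ with $\mu < 0$ lets me arrange, after enlarging $R_0$, that $g$ is $2$-bi-Lipschitz to the pullback of $g_C$; and the flat cone $\mathbb{C}^n/\Gamma = \mathbb{R}^{2n}/\Gamma$ satisfies the genuine Euclidean isoperimetric inequality, obtained by lifting $\Gamma$-invariant competitors to $\mathbb{R}^{2n}$ at the cost of a factor $|\Gamma|^{1/(2n)}$. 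In particular $g$ has uniform Euclidean lower volume growth $\mathrm{Vol}_g(B_\rho(x)) \ge c\, \rho^{2n}$.

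Finally I would patch these into a single isoperimetric inequality with a uniform constant. Given $\Omega$, I split it, subordinate to a cutoff at $r = R_0$, into its part in the core and its part in the end: the exterior part is controlled directly by the Euclidean isoperimetric inequality for $g_C$, while the interior part is handled by the finite cover of the core together with a Whitney-type decomposition into Euclidean-bi-Lipschitz balls, the interface contributing only $\mathrm{Area}_g(\partial \Omega)$ in a fixed transition annulus. The main obstacle is precisely this patching: the local bi-Lipschitz-to-Euclidean comparisons only yield scale-dependent isoperimetric inequalities, and to obtain a constant that does not deteriorate as $\mathrm{Vol}_g(\Omega) \to \infty$ one genuinely needs the Euclidean isoperimetric inequality at infinity --- equivalently, the ALE hypothesis and the resulting Euclidean volume growth --- rather than mere local control. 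That no global bi-Lipschitz model is available is clear, since $\pi$ collapses the positive-dimensional divisor $E$ to the apex $o$ and is therefore not bi-Lipschitz near $E$. A clean alternative to the hands-on patching is to verify the equivalent uniform Faber--Krahn inequality $\lambda_1(\Omega) \ge a\, \mathrm{Vol}_g(\Omega)^{-1/n}$ and to quote the standard equivalence of Faber--Krahn, isoperimetric, and Sobolev inequalities for metric measure spaces with the above local and asymptotic structure.
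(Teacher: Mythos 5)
Your proposal is correct and its opening move coincides with the paper's: both exploit that each side of \eqref{sobineq} changes only by a multiplicative constant under a quasi-isometry, and that in cone coordinates the model $g_{(\beta)}$ is bi-Lipschitz to the Euclidean metric, so the conical singularities along $E$ are harmless. Where you diverge is in what you do with the resulting ALE problem. The paper's proof is essentially a one-liner: it straightens the cone angles by a single compactly supported diffeomorphism $r_je^{i\theta_j}\mapsto z_j=r_j^{1/\beta_j}e^{i\theta_j}$ of a tubular neighbourhood of $E$, observes that the pulled-back metric is quasi-isometric to a \emph{smooth} ALE Riemannian metric, and then quotes the known Sobolev inequality for smooth ALE manifolds from \cite{ConlonHein}. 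You instead reprove that black box from scratch: Federer--Fleming--Maz'ya reduction to the isoperimetric inequality, the lifted Euclidean isoperimetric inequality on $\mathbb{R}^{2n}/\Gamma$ at infinity, removability of the codimension-two set $E$, and a patching (or Faber--Krahn) argument to combine the compact core with the end. Your route is more self-contained and has the virtue of making explicit exactly where the ALE hypothesis enters --- you correctly identify that uniform local bi-Lipschitz control cannot alone give a volume-independent constant, and that Euclidean volume growth at infinity is the essential global input; it also sidesteps the (minor, unaddressed) question of whether the local cone-straightening maps glue to a global diffeomorphism. The cost is that the patching step, which you only sketch and honestly flag as the main obstacle, is precisely the content of the cited smooth ALE result; the paper's choice to delegate it to \cite{ConlonHein} is shorter and arguably cleaner, though less illuminating.
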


\begin{proof}
	We can write a compactly supported diffeomorphism of \(X\) equal to  \( r_j e^{i\theta_j} \to z_j= r_j^{1/\beta_j} e^{i\theta_j} \), for \( j=1, \ldots, d \), in a small tubular neighbourhood of \( E = \{ z_1 \ldots z_d =0 \} \).
	The pull-back of \(g\) by this diffeomorphism is quasi-isometric to a smooth ALE Riemannian manifold, for which the Sobolev inequality \ref{sobineq}  holds, see \cite{ConlonHein}.
\end{proof}

Weighted H\"older spaces \(C^{2, \alpha}_{\nu}\) are defined by means of a cover of a compact piece of \(X\) by a finite number of coordinate charts as in the second item of Definition \ref{DefinitionMetricConeSing}, where the norms of Section \ref{locmodsect} are used,  and the projection \(\pi\) as a coordinate chart at infinity, where the norm
\[ \left( \sum_{j=0}^{2} |r^j \nabla_{g_C}^{j-\nu} u|_{\infty} \right) + \sup_x \left( r(x)^{2+\alpha-\nu} [\nabla_{g_C}^2 u]_{C^{0, \alpha}(B_{r(x)/3}(x)} \right)  \]
is used. The weight \(\nu \in \mathbb{R}\) measures the rate of decay of \(u\), that is \( u= O (r^{\nu}) \) as \(r
 \to \infty\) and the rate decreases by one each time we differentiate with respect to a unit vector field as measured by \(g_C\).

\begin{theorem} \label{LaplaceThm}
	Let \(g\) be an ALE metric with conical singularities as in Definition \ref{DefinitionMetricConeSing} and let \( 0 < \alpha < \min_j 1/\beta_j -1 \)
	\begin{itemize}
		\item If \( \mu \in (-2n, -2) \), then \( \triangle_g : C^{2, \alpha}_{\mu+2} (X) \to C^{\alpha}_{\mu} (X)  \) is an isomorphism.
		\item If \( f \in C^{\alpha}_{\mu} \) with \( \mu \in ( -2n-1, -2n ) \) then there is a unique \( u \in C^{2, \alpha}_{-2n + 2} \) such that \( \triangle_g u =f \). Moreover, \( u = A r^{-2n + 2} + v \) with \( v \in C^{2, \alpha}_{\mu + 2} \).
	\end{itemize}
\end{theorem}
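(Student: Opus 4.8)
This is a weighted Fredholm theorem for $\triangle_g$ of the Lockhart--McOwen type, and the plan is to adapt the smooth ALE analysis of Joyce \cite[Chapter 8]{Joyce} to the conical locus $E$, where Theorem \ref{intSchthm} takes the place of the classical interior Schauder estimate. First I would record the critical weights of the asymptotic model. Separating variables for $\triangle_{g_C}$ over the link $L = S^{2n-1}/\Gamma$, the homogeneous harmonic functions are $r^\lambda\phi$ with $\triangle_L\phi = \nu\phi$ and $\lambda(\lambda+2n-2)=\nu$; the eigenvalue $\nu=0$ gives $\lambda\in\{0,\,2-2n\}$, while $\nu_1 \ge 2n-1$ forces every other exponent to be $\ge 1$ or $\le 1-2n$. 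Thus both weight ranges of interest, $\mu+2 \in (2-2n,0)$ for the first item and $\mu+2\in(1-2n,2-2n)$ for the second, contain no critical weight.

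For the first item I would proceed in three steps. (i) A scale-broken a priori estimate $\|u\|_{C^{2,\alpha}_{\mu+2}} \le C(\|\triangle_g u\|_{C^\alpha_\mu} + \|u\|_{C^0(K)})$, obtained by applying Theorem \ref{intSchthm} on a fixed cover of a compact region around $E$ and classical interior Schauder on dyadic annuli $\{R \le r \le 2R\}$ (rescaled to unit size, where $g$ is uniformly close to $g_C$), then summing over scales against the weight. Together with the compactness of $C^{2,\alpha}_{\mu+2}(K')\hookrightarrow C^0(K)$ and the absence of critical weights, this gives that $\triangle_g$ is Fredholm. (ii) Injectivity: a harmonic $u \in C^{2,\alpha}_{\mu+2}$ with $\mu+2<0$ decays, so the asymptotic expansion (no critical weight in $(2-2n,0)$) improves its rate to $u = O(r^{2-2n})$; integrating $|\nabla_g u|^2 = \operatorname{div}(u\nabla_g u)$ over $\{r\le R\}$, the boundary flux is $O(R^{2-2n})\to 0$, forcing $u\equiv 0$. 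The integration by parts is legitimate across $E$ because $u$ lies in $L^2_1$ and the singular set has real codimension two. (iii) Surjectivity by duality: since $\triangle_g$ is formally self-adjoint, its cokernel pairs via $\int_X(\cdot)(\cdot)\,dV_g$ with harmonic functions decaying faster than $r^{-2n-\mu}$, and since $-2n-\mu \in (2-2n,0)$ these vanish by (ii). Hence $\triangle_g$ is an isomorphism.

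For the second item, $\mu+2\in(1-2n,2-2n)$ sits just below the critical weight $2-2n$, whose indicial space is the one-dimensional constant mode; $\triangle_g : C^{2,\alpha}_{\mu+2}\to C^\alpha_\mu$ is therefore still Fredholm and injective, but its cokernel is now spanned by the constant $1$ (which pairs finitely with $f = O(r^\mu)$ because $\mu<-2n$). To solve, I would fix a cutoff $\chi$ equal to $1$ near infinity and $0$ near the core; then $\triangle_g(\chi r^{2-2n})$ is compactly supported with $\int_X \triangle_g(\chi r^{2-2n})\,dV_g = (2-2n)\operatorname{Vol}(L) =: c \neq 0$ by the divergence theorem. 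Setting $A = c^{-1}\int_X f\,dV_g$, the function $f - A\,\triangle_g(\chi r^{2-2n})$ has vanishing total integral, hence equals $\triangle_g v$ for some $v\in C^{2,\alpha}_{\mu+2}$; then $u = v + A\chi r^{2-2n}$ solves $\triangle_g u = f$ and has the asserted form $u = A r^{2-2n} + v$. Uniqueness in $C^{2,\alpha}_{-2n+2}$ is again (ii): a harmonic difference is $O(r^{2-2n})$ and the flux argument kills it.

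The main obstacle is assembling the Fredholm package of step (i) so that the conical geometry along $E$ and the asymptotically conical geometry at infinity are handled at once: near $E$ the interior control must come from Guo--Song (Theorem \ref{intSchthm}) rather than from classical elliptic theory, and one must verify that the global integration-by-parts and duality arguments are untouched by the codimension-two singular set (via the $L^2_1$ theory and the Sobolev inequality of Proposition \ref{sobineqprop}). Once the model cone and its critical weights are in hand, the behaviour at infinity is standard ALE analysis.
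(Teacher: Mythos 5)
Your proposal is correct and follows the same route as the paper, which simply cites Guo--Song's interior Schauder estimate (Theorem \ref{intSchthm}) near $E$ together with the standard weighted Fredholm theory for the Laplacian on ALE manifolds (Joyce, Conlon--Hein); you have essentially written out the content of those references, with the right indicial roots $\{0,2-2n\}\cup(-\infty,1-2n]\cup[1,\infty)$ and the correct injectivity/duality arguments. The only slip is that $\triangle_g(\chi r^{2-2n})$ is compactly supported only when $g=g_C$ exactly near infinity; in general it merely decays like $r^{-2n+\mu_0}$, which still integrates to the flux $(2-2n)\mathrm{Vol}(L)$ and leaves the argument intact.
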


\begin{proof}
	This is an immediate consequence of the interior estimates in Theorem \ref{intSchthm} of Section \ref{locmodsect} together with standard results for the Laplacian on ALE manifolds -see \cite[Theorem 8.3.6]{Joyce} and also  \cite[Theorems 2.9 and 2.11]{ConlonHein} and references therein.
\end{proof}

\begin{remark}
	The constant \(A\) in the second item of Theorem \ref{LaplaceThm} is given, as in \cite[Theorem 8.3.6]{Joyce}, by
	\begin{equation*}
	A=	\frac{1}{(n-2)\mbox{Vol} (S^{2n-1}/ \Gamma) } \int_X f dV_g  .
	\end{equation*}
\end{remark}

\section{Proof of Theorem \ref{theorem}} \label{proofsect}

Let \( \pi : X \to \mathbb{C}^n / \Gamma \) be a resolution as in Theorem \ref{theorem}. We are assuming that \( E_j \subset X \) are the smooth prime components of the simple normal crossing exceptional divisor \( \pi^{-1}(o) = E = \cup_{j=1}^N E_j \). It is proved in \cite{vanCoevering} that the K\"ahler cone of \(X\) has real dimension \(N\) provided it is non empty.\footnote{It is also possible to show that \( H^p (X, \mathcal{O}_X) = 0 \) for \(p>0\) but we won't make explicit use of it, see \cite{vanCoevering}.} Indeed, van Coevering shows that the fundamental classes of the \(E_j\) form a basis for \( H_{2n-2} (X, \mathbb{R}) \), so their compactly supported Poinar\'e duals \((1, 1)\)-forms make a basis for \( H^2_c (X, \mathbb{R}) \). Moreover,  since  \( H^1 (S^{2n-1} / \Gamma, \mathbb{ R } ) = H^2 (S^{2n-1} / \Gamma, \mathbb{ R } ) = 0 \), the long exact sequence
\begin{equation*}
\ldots \to	H^1 (S^{2n-1} / \Gamma, \mathbb{ R } ) \to H^2_c (X, \mathbb{R}) \to H^2 (X, \mathbb{R}) \to H^2 (S^{2n-1} / \Gamma, \mathbb{R}) \to \ldots
\end{equation*}
gives us that \( H^2_c (X, \mathbb{R}) \cong H^2 (X, \mathbb{R}) \). 

\subsection{Reference metric} \label{refmetsect}
Fix a positive cohomology class on \(X\).  \cite[Lemma 4.3]{vanCoevering} ensures that there is a smooth K\"ahler form \(\eta\) on the class which satisfies \( \eta = \pi^* \omega_C \) outside a compact set. For each \(j=1, \ldots, N\) we let \( s_j \in H^0(\mathcal{O}(E_j)) \) be a defining section of \(E_j\). We take Hermitian metrics \(h_j\) on the line bundles \(\mathcal{O}(E_j)\) with \( |s_j|_{h_j}^2 \equiv 1 \) outside a compact set. For \( \delta > 0 \), we set
 \begin{equation} \label{referencem}
 	\omega_{ref} = \eta + \delta \sum_{j=1}^{N} i \dd |s_j|_{h_j}^{2\beta_j}
 \end{equation}

\begin{lemma} \label{refmetlem}
	If \(\delta>0\) is small enough, then equation \eqref{referencem} defines a K\"ahler metric on \(X\) with cone angle \(2\pi\beta_j\) along \(E_j\) as in Definition \ref{DefinitionMetricConeSing}. Moreover, \(\omega_{ref} = \pi^* \omega_C \) outside a compact set and \( \mbox{Bisec} (\omega_{ref}) \) is uniformly bounded above.
\end{lemma}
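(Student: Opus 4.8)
The plan is to reduce everything to a local computation near \(E\) built on the explicit formula for \(i\partial\bar\partial\) of the weighted norm of a section. Writing \(t_j=|s_j|^2_{h_j}\) and \(\Theta_j=-i\partial\bar\partial\log t_j\) for the (smooth) Chern curvature of \((\mathcal{O}(E_j),h_j)\), a direct computation gives
\[
i\partial\bar\partial\, t_j^{\beta_j}=\beta_j\, t_j^{\beta_j}\Big(\beta_j\, i\,\partial\log t_j\wedge\bar\partial\log t_j-\Theta_j\Big).
\]
The first term is a nonnegative \((1,1)\)-form, and in local coordinates in which this component is \(\{z=0\}\) one has \(t_j=|z|^2 a\) with \(a\) smooth positive, so \(\partial\log t_j=dz/z+\partial\log a\) and the leading part of the first term is the conical term \(\beta_j^2|z|^{2\beta_j-2}\,i\,dz\wedge d\bar z\); the second term \(-\beta_j t_j^{\beta_j}\Theta_j\) is smooth, bounded, and vanishes along \(E_j\). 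Since \(t_j\equiv1\) outside a compact set, each \(i\partial\bar\partial t_j^{\beta_j}\) vanishes there; hence the perturbation in \eqref{referencem} is compactly supported. This gives at once that \(\omega_{ref}=\eta=\pi^*\omega_C\) outside a compact set, and in particular the decay condition in Definition \ref{DefinitionMetricConeSing} holds trivially, since \((\pi^{-1})^*\omega_{ref}-\omega_C\) is identically zero at infinity.

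Next I would verify positivity and the remaining items of Definition \ref{DefinitionMetricConeSing} for \(\delta\) small. Fix a tubular neighbourhood \(U\) of \(E\), chosen small enough (independently of \(\delta\), since both the conical and the correction terms carry the same factor \(\delta\)) that on \(U\) the leading conical parts dominate the cross and \(\Theta_j\) corrections. On \(U\) the nonnegative conical terms control the directions transverse to \(E\), giving a bound of the schematic form \(\delta\,i\partial\bar\partial t_j^{\beta_j}\ge (c\delta)\,\beta_j^2|z_j|^{2\beta_{i_j}-2}\,i\,dz_j\wedge d\bar z_j-C\delta\,g_{eucl}\); since \(\eta\ge c_0\,g_{eucl}\), for \(\delta\) small we obtain \((1/C)g_{(\beta)}\le\omega_{ref}\le C g_{(\beta)}\) on \(U\). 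On the compact region \(K\setminus U\), where \(K\) contains the support of the perturbation and is disjoint from \(E\) there, every \(i\partial\bar\partial t_j^{\beta_j}\) is smooth and bounded, so \(\omega_{ref}=\eta+\delta(\text{bounded})>0\) for \(\delta\) small; outside \(K\) we have \(\omega_{ref}=\pi^*\omega_C>0\). Finally, locally \(\omega_{ref}=i\partial\bar\partial\varphi\) with \(\varphi=\varphi_\eta+\delta\sum_k|s_k|^{2\beta_k}\); near \(x\in E\) the singular part is \(\delta\sum_{j=1}^d|z_j|^{2\beta_{i_j}}a_{i_j}^{\beta_{i_j}}\), and computing \(\partial\varphi\) and \(i\partial\bar\partial\varphi\) in the \(\epsilon_j\)-frame of Section \ref{locmodsect} shows these are \(C^\alpha\) for \(\alpha<\min_j(\beta_j^{-1}-1)\), so \(\varphi\in C^{2,\alpha}\); this is exactly the regularity built into the norms of Section \ref{locmodsect}.

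The main work is the upper bound on \(\operatorname{Bisec}(\omega_{ref})\), which I would reduce to the region near \(E\): outside a compact set the metric is flat (\(\pi^*\omega_C\)), and on the compact part away from \(E\) it is a smooth Kähler metric, so in both places the bisectional curvature is bounded. Near \(x\in E\) I would compute the curvature tensor \(R_{p\bar q r\bar s}=-\partial_p\partial_{\bar q}g_{r\bar s}+g^{a\bar b}\partial_p g_{r\bar b}\partial_{\bar q}g_{a\bar s}\) of \(g_{p\bar q}=\partial_p\partial_{\bar q}\varphi\) in the holomorphic cone coordinates. The crucial point is that \(\log|z_j|^2\) is pluriharmonic away from \(\{z_j=0\}\); equivalently, the flat cone metric \(\beta_{i_j}^2|z_j|^{2\beta_{i_j}-2}\,i\,dz_j\wedge d\bar z_j\) has vanishing curvature off \(E\). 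Consequently the most singular contributions, those in which all derivatives fall on the factors \(|z_j|^{2\beta_{i_j}}\), cancel, and every surviving term carries at least one derivative of a smooth factor, which supplies an extra nonnegative power of \(|z_j|\). After normalising by the metric, which is of size \(|z_j|^{2\beta_{i_j}-2}\) in the conical directions, a power count shows the bisectional curvatures are \(O(|z_j|^{2-2\beta_{i_j}})\) along the conical directions and bounded in the mixed and tangential ones, hence bounded above near \(E\). The main obstacle is precisely this bookkeeping: organising the many terms of \(R_{p\bar q r\bar s}\) in the \(\epsilon_j\)-frame, isolating the flat-cone cancellation, and checking uniformly that each remaining term stays bounded above as one approaches \(E\); this is where the hypotheses \(0<\beta_j<1\), so that \(2-2\beta_j>0\), enter decisively.
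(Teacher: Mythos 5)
The first two paragraphs of your argument (the local computation of \(i\partial\bar\partial\,t_j^{\beta_j}\), positivity for small \(\delta\), the \(C^{2,\alpha}\) potentials, and the identity \(\omega_{ref}=\pi^*\omega_C\) outside a compact set) are fine and correspond to what the paper treats as standard. The gap is in the curvature bound. Your power count --- that after the flat-cone cancellation every bisectional curvature is \(O(|z_j|^{2-2\beta_{i_j}})\) in the conical directions and bounded elsewhere, ``hence bounded above'' --- is false, and in fact it would prove too much: it would give a two-sided bound on the curvature, whereas the lemma only claims an upper bound, and the paper itself points out that \(\omega_{ref}\) has Ricci curvature unbounded below as soon as some angle exceeds \(\pi\). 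Already in the one-dimensional model \(g=1+\delta\beta^2|z|^{2\beta-2}\) one computes \(\partial_z\partial_{\bar z}\log g\sim c\,|z|^{-2\beta}\) (the harmonic part \((2\beta-2)\log|z|\) drops out, but the correction \(\log(1+c'|z|^{2-2\beta})\) does not), so the Gauss curvature behaves like \(-c''|z|^{2-4\beta}\), which is unbounded whenever \(\beta>1/2\). So the ``surviving terms'' do \emph{not} all stay bounded; what saves the statement is that the divergent contribution comes with a definite (negative) sign, being of the form \(-g^{a\bar b}\partial g\,\overline{\partial g}\), i.e.\ a sum of squares. An honest direct proof must isolate this term and verify its sign, not estimate its size; your write-up never does this, and the phrase ``checking uniformly that each remaining term stays bounded above'' is exactly the step that is missing.

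For comparison, the paper avoids this bookkeeping entirely: locally it writes \(\omega_{ref}\) as the pull-back of a \emph{smooth} K\"ahler metric \(\eta_0+\sum_j i\partial\bar\partial(F_j|z_{n+j}|^2)\) on \(\mathbb{C}^{n+d}\) under the holomorphic map \((z_1,\dots,z_n)\mapsto(z_1,\dots,z_n,z_1^{\beta_1},\dots,z_d^{\beta_d})\), realizing \((X\setminus E,\omega_{ref})\) near \(E\) as a K\"ahler submanifold of a manifold with bounded curvature, and then invokes the monotonicity of holomorphic bisectional curvature under K\"ahler submanifolds. This gives the upper bound (and only the upper bound, consistently with the unboundedness below) in two lines. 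If you want to keep the direct computational route, the correct model is the curvature computation for \(i\partial\bar\partial|s|^{2\beta}\)-type metrics in the normal crossing setting, where the conclusion is precisely that the one potentially unbounded block of the curvature tensor is negative semi-definite in the relevant sense.
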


\begin{proof}
	It is standard that, provided \(\delta\) is small,  \(\omega_{ref}\) is positive defines a metric with cone singularities along \(E\) of the required type, see \cite{GuoSongII}. The upper bound on the curvature follows by writing \(\omega_{ref}\), in local coordinates where \(E = \{z_1 \ldots z_d =0 \} \), as the pull-back of a smooth  metric \(\eta_0 + \sum_{j=1}^{d} i \dd (F_j |z_{n+j}|^2) \) on \(\mathbb{C}^{n+d}\) under the map \( (z_1, \ldots, z_n) \to (z_1, \ldots, z_n, z_1^{\beta_1}, \ldots, z_d^{\beta_d}) \). Together with the standard fact that the bisectional curvature of a K\"ahler submanifold is bounded above by that of the ambient space. 
\end{proof}

From now on we fix \(\delta>0\) as in Lemma \ref{refmetlem} and our reference metric is given by equation \ref{referencem}.

\subsection{Initial metric} \label{initialmetricsect} In general, our reference metric \(\omega_{ref}\) will have Ricci curvature unbounded below if any of the angles is bigger than \(\pi\). We start our continuity path with a metric whose Ricci curvature is uniformly bounded. We achieve this by performing a small perturbation on the volume form of \(\omega_{ref}\), following \cite{deBorbon2}.  

Fix \( 0 < \alpha < \min_j 1/\beta_j -1 \) and \( -2n < \mu < -2 \). The Ricci potential of \(\omega_{ref}\) is given by 
\begin{equation*}
	\omega_{ref}^{n} = e^{-f_{ref}} i^{n^2}\Omega \wedge \overline{\Omega} .
\end{equation*}
Since \( \omega_{ref}\) has \(C^{2, \alpha}\) potentials, as defined in Section \ref{prelimsect}, around its conically singular locus,  we deduce that \( f_{ref} \in C^{\alpha}_{loc} (X) \). We also know that \(f_{ref}\) is compactly supported because \( \omega_{ref} = \pi^* \omega_C\) outside a compact set. Decreasing the H\"older exponent we can approximate \(f_{ref}\) arbitrary close in \(C^{\alpha}_{\mu} (X) \)  by a compactly supported function \(f_0\), smooth in complex coordinates. We use the first item in Theorem \ref{LaplaceThm} and the implicit function theorem to solve \( ( \omega_{ref} + i \dd \varphi )^n = e^{f_{ref}- f_0} \omega_{ref}^n  \)  with \( \varphi \in C^{\alpha}_{\mu + 2} (X) \). We set \( \omega_0 = \omega_{ref} + i \dd \varphi \), so that  $\omega_0^{n} = e^{-f_0} \Omega \wedge \overline{\Omega}$ with \(f_0\) a compacly supported function smooth in complex coordinates. In particular, \( | \mbox{Ric}(\omega_0)|_{\omega_0} \) is uniformly bounded and \( \mbox{Ric}(\omega_0) \equiv 0 \) outside a compact set. Note that \(\omega_0\) is also of the type considered in Definition \ref{DefinitionMetricConeSing}, it is our initial metric in the continuity path. Since \(\omega_0\) and \(\omega_{ref}\) are fixed and have the same type of singularities/asymptotics then \(  C^{-1} \omega_{ref} \leq \omega_0 \leq C \omega_{ref}   \) on \(X\) for some \(C>0\).

\subsection{Continuity path}
We want to find a solution $u \in C^{2, \alpha}_{\mu + 2}(X)$ of \( (\omega_0 + i \dd  u )^n = e^{ f_0} \omega_0^n \). We use the continuity path
\begin{equation} \label{CPath}
(\omega_0 + i \dd u_t)^n = e^{tf_0} \omega_0^n 
\end{equation}
and consider the set
$$ T = \{ t \in [0, 1] \hspace{2mm} \mbox{such that there is} \hspace{2mm} u_t \in C^{2, \alpha}_{\mu + 2}(X) \hspace{2mm} \mbox{solution of equation \ref{CPath}} \} . $$
We start at  $t=0$ with $u_0 =0$. The goal is to show that $T$ is open and closed.
The linear theory implies that $T$ is open. The fact that $T$ is closed follows from the following a priori estimate: 

\begin{proposition} \label{AprioriEstimate}
	There is a constant $C$, independent of $t \in T$, such that
	\begin{equation}
		\|u_t \|_{2, \alpha, \mu+ 2} \leq C .
	\end{equation}
\end{proposition}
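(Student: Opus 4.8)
The plan is to follow Yau's method of continuity and to establish, uniformly in \(t \in T\), the three classical a priori estimates for equation \eqref{CPath}---the \(C^0\) bound, the second order (metric equivalence) bound, and the Schauder bound on \(i\,\dd u_t\)---and then to upgrade these to the weighted norm \(\|\cdot\|_{2,\alpha,\mu+2}\) by a separate analysis at infinity. The structure parallels Joyce's treatment of the smooth ALE Calabi conjecture \cite[Chapter 8]{Joyce}, with two modifications: near the exceptional divisor \(E\) the classical interior estimates of Yau and Calabi are replaced by the conical interior estimates of Guo--Song (Theorems \ref{intSchthm} and \ref{IntMongAmp}), as in the compact logarithmic setting \cite{deBorbon2}; and at infinity the estimates are organized through the weighted linear theory of Theorem \ref{LaplaceThm}. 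Throughout I write \(\omega_{u_t} = \omega_0 + i\,\dd u_t\).

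\emph{Zeroth order estimate.} First I would prove \(\|u_t\|_{C^0(X)} \le C\) independently of \(t\). Since \(u_t \in C^{2,\alpha}_{\mu+2}(X)\) with \(\mu + 2 < 0\), it decays at infinity, so its extrema are controlled there. Writing \eqref{CPath} as \(\omega_{u_t}^n = e^{tf_0}\omega_0^n\) and using that \(e^{tf_0}-1\) is compactly supported (because \(f_0\) is), I would run a Moser iteration based on the Sobolev inequality of Proposition \ref{sobineqprop}: testing the equation against suitable powers of \(u_t\) and using the elementary identity relating \(\omega_{u_t}^n - \omega_0^n\) to \(i\,\partial u_t \wedge \bar{\partial} u_t \wedge (\cdots)\) yields an \(L^\infty\) bound in terms of an \(L^p\) norm of \(e^{tf_0}-1\), which is uniform in \(t\).

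\emph{Second and higher order estimates.} The crux is the uniform two-sided bound \(K^{-1}\omega_0 \le \omega_{u_t} \le K\omega_0\). I would apply the Chern--Lu inequality to \(H = \log \operatorname{tr}_{\omega_0}\omega_{u_t} - A\,u_t\). Since \(\omega_0\) has Ricci curvature bounded below and \(f_0\) is smooth and compactly supported, the domain metric \(\omega_{u_t}\) has Ricci bounded below; combined with the upper bound on the bisectional curvature of \(\omega_{ref}\), and hence of \(\omega_0\) (which differs from \(\omega_{ref}\) by a fixed compactly supported potential), from Lemma \ref{refmetlem}, Chern--Lu gives \(\Delta_{\omega_{u_t}}\log\operatorname{tr}_{\omega_0}\omega_{u_t} \ge -C_1 - C_2\operatorname{tr}_{\omega_{u_t}}\omega_0\). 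Using \(\Delta_{\omega_{u_t}}u_t = n - \operatorname{tr}_{\omega_{u_t}}\omega_0\) and choosing \(A > C_2\), one finds \(\Delta_{\omega_{u_t}}H \ge (A-C_2)\operatorname{tr}_{\omega_{u_t}}\omega_0 - C_1 - An\). For each fixed \(t\), \(\operatorname{tr}_{\omega_0}\omega_{u_t}\) is bounded and continuous up to \(E\) and tends to \(n\) at infinity, so the supremum of \(H\) is finite; the differential inequality then bounds \(\operatorname{tr}_{\omega_{u_t}}\omega_0\) at the maximum, and together with the prescribed determinant \(\omega_{u_t}^n/\omega_0^n = e^{tf_0}\) this yields the metric equivalence, uniformly in \(t\). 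With this in hand the hypotheses \(K^{-1}\omega_{(\beta)} \le i\,\dd u_t \le K\omega_{(\beta)}\) of Theorem \ref{IntMongAmp} hold on small balls near \(E\) after absorbing \(\omega_0\) into the model, and since the right hand side density is \(C^\alpha\), Theorem \ref{IntMongAmp} gives a uniform interior \(C^\alpha\) bound on \(i\,\dd u_t\); in the smooth region and at infinity the same follows from standard Evans--Krylov and Schauder theory. This produces a uniform local \(C^{2,\alpha}\) bound in the cone norms of Section \ref{locmodsect}.

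\emph{Weighted estimate and main obstacle.} Finally I would promote the local bounds to the weighted norm. Outside a fixed compact set \eqref{CPath} reads \(\omega_{u_t}^n = \omega_0^n\) with \(\omega_0 = \pi^*\omega_C\), so the equation is homogeneous and rewrites, via its linearization, as \(\Delta_{\omega_0} u_t = N(i\,\dd u_t)\) with \(N\) quadratic and higher order in the already-controlled Hessian, hence lying in \(C^\alpha_{2\mu} \subset C^\alpha_{\mu}\) since \(2\mu < \mu\); the isomorphism in the first item of Theorem \ref{LaplaceThm} then gives \(u_t \in C^{2,\alpha}_{\mu+2}\) with a bound in terms of \(\|u_t\|_{C^0}\) and the local estimates, a bootstrap closing the decay rate (the possible \(r^{2-2n}\) mode being absorbed since \(2-2n < \mu+2\)). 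Tracking constants shows they depend only on \(\omega_0\), the curvature bounds, the Sobolev constant, and \(\|f_0\|_{C^\alpha_\mu}\), all independent of \(t\), which gives Proposition \ref{AprioriEstimate}. The step I expect to be the main obstacle is the second order estimate: making the maximum principle for \(H\) rigorous across the conical divisor---where \(\operatorname{tr}_{\omega_0}\omega_{u_t}\) is only \(C^\alpha\) and the classical smoothing is unavailable, so that one must rule out the supremum being approached along \(E\) by comparison with the explicit model---while simultaneously controlling its behaviour at infinity, so that a single argument produces a constant uniform in \(t\).
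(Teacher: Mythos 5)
Your overall architecture ($C^0$ bound by Moser iteration, metric equivalence by a maximum principle, $C^{2,\alpha}$ by Guo--Song, then a weighted upgrade) is the same as the paper's, but the second order estimate as written does not go through. You work with $\operatorname{tr}_{\omega_0}\omega_{u_t}$ and invoke the \emph{upper} bound on the bisectional curvature of the reference metric. The inequality $\Delta_{\omega_{u_t}}\log\operatorname{tr}_{\omega_0}\omega_{u_t}\ge -C_1-C_2\operatorname{tr}_{\omega_{u_t}}\omega_0$ is the Yau--Aubin form of the second order estimate, and the constant $C_2$ there is a \emph{lower} bound for the bisectional curvature of the fixed metric; near the conical divisor no such lower bound is available --- this is exactly the point where the conical case departs from the smooth one, and why Lemma \ref{refmetlem} asserts only an upper bound. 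The usable version is the Chern--Lu inequality for $\operatorname{tr}_{\omega_{t}}\omega_{ref}$ (the identity map from $(X,\omega_{t})$ to $(X,\omega_{ref})$), which requires $\operatorname{Ric}(\omega_{t})$ bounded below (true, since $\operatorname{Ric}(\omega_t)=(1-t)i\dd f_0\ge -C_1\omega_{ref}$) and $\operatorname{Bisec}(\omega_{ref})$ bounded \emph{above}. This is what the paper does, applying the maximum principle to $\operatorname{tr}_{\omega_t}\omega_{ref}-A\tilde u_t$ with $\tilde u_t=\varphi+u_t$. Note also that your transfer of the curvature bound from $\omega_{ref}$ to $\omega_0$ (``differs by a fixed compactly supported potential'') is not legitimate: curvature involves two more derivatives of the potential than the $C^{2,\alpha}$ regularity of $\varphi$ provides; working with $\omega_{ref}$ and the shifted potential $\tilde u_t$ avoids this entirely. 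Once one has $\operatorname{tr}_{\omega_t}\omega_{ref}\le C$, the volume ratio gives the two-sided equivalence exactly as you describe.

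The weighted step is also circular as written. To place the nonlinearity $N(i\dd u_t)$ in $C^{\alpha}_{2\mu}$ you need $i\dd u_t\in C^{\alpha}_{\mu}$ with a $t$-uniform constant, which is essentially the conclusion you are trying to reach; the interior estimates of the previous step only give an unweighted (weight $0$) bound on the Hessian, and weight $0$ lies outside the range $(-2n,-2)$ in which Theorem \ref{LaplaceThm} applies, so the bootstrap cannot start. An independent first input of decay is needed: the paper runs a \emph{weighted} Moser iteration (as in \cite[Theorem 8.6.11 and Proposition 8.6.12]{Joyce}), exploiting that $e^{tf_0}-1$ is compactly supported, to obtain a uniform $C^0_{2+\mu'}$ bound with $2+\mu<2+\mu'<0$, and only then feeds the equation into the linear theory to reach $\|u_t\|_{2,\alpha,\mu+2}\le C$. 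Your remark about the $r^{2-2n}$ mode is fine, and your concern about justifying the maximum principle across $E$ is legitimate (it is handled by the standard barrier or almost-maximum arguments of the conical literature), but the direction of the trace and the missing initial decay are the two substantive gaps.
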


\begin{proof}
	We divide the proof into four steps:
\begin{itemize}
	\item \emph{\(C^0\)-estimate:} Note that \(u \in L^p (X) \) if \(p\) is sufficiently big, since \( u = O(r^{\mu}) \) for some \(\mu < 0\).  The Sobolev inequality (Proposition \ref{sobineq}) for the metric \(\omega_0\) allows us to run Moser iteration as in  \cite[Theorem 8.6.1]{Joyce} to bound \( \|u\|_{C^0} = \lim_{p \to \infty} \|u\|_{L^p} \).
	
	\item \emph{\(C^2\)-estimate:} 
	Write $\omega_t = \omega_0 + i \dd u_t$. It follows from
	equation \eqref{CPath}, that $\mbox{Ric}(\omega_t) = (1-t) i \dd f_0$. Since $f_0$ is smooth, there is a constant $C_1>0$ such that $i \dd f_0 \geq - C_1 \omega_{ref}$. Then $\mbox{Ric}(\omega_t) \geq  - C_1 \omega_{ref}$. On the other hand $\mbox{Bisec}(\omega_{ref}) \leq C_2$.
	Write  $\tilde{u}_t = \varphi + u_t$ where \(\varphi\) was previously defined in Section \ref{initialmetricsect} as \( \omega_0 = \omega_{ref} + i \dd \varphi \), so that $\omega_t = \omega_{ref} + i \dd \tilde{u}_t$. The Chern-Lu inequality tells us that
	\begin{equation} \label{CHERN LU}
	\Delta_{\omega_t} ( \mbox{tr}_{\omega_t} \omega_{ref} - A \tilde{u}_t) \geq  \mbox{tr}_{\omega_t}\omega_{ref} - \tilde{A} , 
	\end{equation}
	where \(A\) and \(\tilde{A}\) only depend on \(C_1\) and \(C_2\).
	We can now use equation \ref{CHERN LU} and the maximum principle, together with the estimate on $\| \tilde{u}_t \|_0$, to get the uniform bound $ \mbox{tr}_{\omega_t} \omega_{ref} \leq C$. This bound together with the fact (implied by the equations) that the volume forms are uniformly equivalent give us that $ C^{-1} \omega_{ref} \leq \omega_t \leq C \omega_{ref} $. 
	
	\item \emph{\(C^{2, \alpha}\)-estimate:} This is a local result, it follows immediately from the interior Schauder estimates for the complex Monge-Amp\`ere operator that we stated (Theorem \ref{IntMongAmp}). 
	
	\item \emph{Weighted estimates:} Follows exactly as in \cite[Theorem A2, Section 8.5]{Joyce}. The Sobolev inequality for the metric \(\omega_0\) allows us to run a weighted Moser iteration to obtain a uniform bound on \(C^0_{ 2 + \mu'}\) with \( 2+ \mu < 2 + \mu' < 0 \). The linear theory (Theorem \ref{LaplaceThm}) is used (as in \cite[Theorem 8.6.11 and Proposition 8.6.12]{Joyce}) to improve this bound to \(	\|u_t \|_{2, \alpha, \mu+ 2} \leq C\). 
	
\end{itemize}

\end{proof}

We deduce that \(T = [0, 1] \) and set \(\omega_{RF} = \omega_0 + i \dd u \) with \(u = u_1 \). The decay as \(r
\to \infty\) of higher order derivatives  follows by bootstrapping. Alternatively it is also possible to smooth out the conical singularities (without changing the metric outside a compact set) and appeal to Joyce's Calabi conjecture theorem (\cite[Section 8.5]{Joyce}). In order to improve the decay rate from \( \mu \) to \(-2n\), we note that \( (-\triangle_{g_0} u) \omega_0^n = (1-e^{f_0})\omega_0^n + (i \dd u)^2\omega_0^{n-2} + \ldots + (i\dd u)^n  \) and apply the second item in Theorem 
\ref{LaplaceThm}. We conclude that, outside a compact set, 
\begin{equation*}
	\omega_{RF} = \frac{i}{2} \dd \left( r^2 + A r^{2-2n} + v \right)
\end{equation*}
with \( v \in C^{\infty}_{\nu} \) for any \( \nu \in (-2n+1, -2n +2) \) and \( A = \frac{1}{(n-2)\mbox{Vol} (S^{2n-1}/ \Gamma) } \int_X (1-e^{f_0}) dV_{g_0} \).\footnote{It is observed by Joyce \cite[proof of Theorem 8.2.3]{Joyce} that in the smooth ALE  setting \(A < 0\)  and it agrees, up to a negative constant factor, with the \(L^2\)-norm squared of the unique \(L^2\)-harmonic representative of the cohomology class of the metric. See also \cite{vanCoevering}.}
Theorem \ref{theorem} is now proved.

\section{Examples} \label{examplessect}

We present some examples of resolutions of quotient singularities with negative discrepancies. In all cases discussed in this section, the resolution \(X\) is a \emph{quasi-projective} variety. Hence, for any such \(X\) Theorem \ref{theorem} gives us a family, of real dimension equal to the second Betti number of \(X\), of conically singular ALE Calabi-Yau metrics. 

\subsection{Surfaces} \label{surfsect}

It is interesting to apply our result to the two dimensional case. Consider an isolated quotient singularity of the form $\mathbb{C}^2/\Gamma$, where $\Gamma \subset U(2)$ is any finite subgroup of unitary matrices acting freely on the three sphere. We remark that this class of orbifold singularities is precisely the class of klt singularities in two dimensions. 

Let $p:X\rightarrow \mathbb{C}^2/\Gamma$ be the \emph{minimal resolution} of singularities (that is a log-resolution with no $(-1)$-curve in the exceptional set). It is classical that such resolution always exists and it is unique. Moreover, denoting with $E_j$ the prime divisors in the exceptional set, it is well-known \cite{Alexeev} that the \(E_j\) are rational curves with self intersections $E_j^2\leq -2$, and that the inverse of the negative definite intersection matrix $(E_i \cdot E_j)$ has all non-positive rational entries. Thus it follows immediately by adjunction that the log-discrepancies in $K_X=p^\ast \left(K_{\mathbb{C}^2/\Gamma}\right)+\sum_j a_j E_j$ are all non-positive. Indeed\footnote{Note that the above computation is identical to the one considered in \cite{HeinLeBrun} in the study of the mass of ALE scalar flat K\"ahler manifolds.}
$$ 0 \leq K_{E_i} \cdot E_i -E_i^2 = K_X \cdot E_i = \sum_{j}a_j E_i \cdot E_j , $$
where the first inequality follows since \( K_{E_i} \cdot E_i = -\chi (E_i) =-2 \). 
Thus, we are within the hypothesis of our main theorem and we get the following:

\begin{corollary}\label{cor1}
	The minimal resolution of a two dimensional quotient singularity $\mathbb{C}^2/\Gamma$, for any   $\Gamma \subset U(2)$ acting freely on the three-sphere, admits ALE Calabi-Yau metrics with conical singularities along a compact subset in each K\"ahler class.
\end{corollary}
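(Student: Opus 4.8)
The plan is to verify that the minimal resolution $p : X \to \mathbb{C}^2/\Gamma$ fits the hypotheses of Theorem \ref{theorem}, since all the analytic work is already packaged there. The theorem requires a log-resolution with simple normal crossing exceptional divisor satisfying $K_X = \sum_j (\beta_j - 1) E_j$ with $0 < \beta_j < 1$, i.e.\ discrepancies $a_j = \beta_j - 1 \in (-1, 0)$. So the entire task reduces to a discrepancy computation on the resolution, which is a purely algebro-geometric (and in fact essentially linear-algebraic) fact.

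First I would record the structural input about the minimal resolution of a klt surface singularity. By the classification, for any finite $\Gamma \subset U(2)$ acting freely on $S^3$ the minimal resolution exists, is unique, has simple normal crossing exceptional divisor $E = \cup_j E_j$ whose components are smooth rational curves meeting transversally, and the intersection matrix $(E_i \cdot E_j)$ is negative definite with $E_j^2 \le -2$. The key numerical fact I would cite (from \cite{Alexeev}) is that the inverse of this negative definite matrix has all non-positive entries. Then I would write $K_X = p^\ast K_{\mathbb{C}^2/\Gamma} + \sum_j a_j E_j$; since $\mathbb{C}^2/\Gamma$ is Gorenstein-up-to-klt one has $p^\ast K_{\mathbb{C}^2/\Gamma} \cdot E_i = 0$, so the numbers $a_j$ are determined by the linear system $\sum_j a_j (E_i \cdot E_j) = K_X \cdot E_i$.

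The heart of the computation, already sketched in the excerpt, is to show $a_j \le 0$ (equivalently $\beta_j \le 1$). By adjunction, $K_X \cdot E_i = K_{E_i} \cdot E_i - E_i^2 = -\chi(E_i) - E_i^2 = -2 - E_i^2 \ge 0$, using that each $E_i \cong \mathbb{P}^1$ so $\chi(E_i) = 2$ and $E_i^2 \le -2$. Thus the vector $(K_X \cdot E_i)_i$ is entrywise non-negative. Writing $\mathbf{a} = M^{-1} \mathbf{b}$ where $M = (E_i \cdot E_j)$ and $\mathbf{b} = (K_X \cdot E_i)$, the non-positivity of the entries of $M^{-1}$ together with $\mathbf{b} \ge 0$ forces $\mathbf{a} \le 0$, giving $\beta_j = a_j + 1 \le 1$ for all $j$. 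The lower bound $a_j > -1$, i.e.\ $\beta_j > 0$, is automatic from kltness of quotient singularities, as noted after equation \eqref{angle condition}. One subtlety I would flag: the theorem demands strict inequalities $0 < \beta_j < 1$, whereas the crepant ($a_j = 0$, Du Val) case gives $\beta_j = 1$; however, the hypothesis \eqref{angle condition} is exactly the non-positive-discrepancy condition, and for $\Gamma \not\subset SU(2)$ at least one discrepancy is strictly negative, while the boundary $\beta_j = 1$ components correspond to genuinely smooth (non-conical) directions recovered by Kronheimer's construction.

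With the discrepancy condition \eqref{angle condition} verified, I would conclude by invoking Theorem \ref{theorem} directly: it produces, in each K\"ahler class in $H^2(X, \mathbb{R})$, a unique Ricci-flat K\"ahler metric with cone angle $2\pi\beta_j$ along $E_j$ and ALE decay $O(r^{-4})$ toward the flat cone $\mathbb{C}^2/\Gamma$ (here $n = 2$). Since the cone locus $E = p^{-1}(o)$ is compact, this is precisely the asserted family of conically singular ALE Calabi-Yau metrics, of real dimension $b_2(X) = N$. I do not anticipate a genuine obstacle: the only step requiring care is the sign chain in the discrepancy computation, and that is controlled entirely by the cited properties of the intersection matrix. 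The result is thus a clean specialization of the main theorem to the two-dimensional quotient setting.
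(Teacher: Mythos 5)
Your proposal is correct and follows essentially the same route as the paper: adjunction gives $K_X\cdot E_i = -2 - E_i^2 \ge 0$, and the entrywise non-positivity of the inverse intersection matrix (cited to \cite{Alexeev}) then forces $a_j\le 0$, after which Theorem \ref{theorem} applies. Your remark on the strictness of $0<\beta_j<1$ is a genuine point the paper glosses over; since $E$ is connected and the inverse of a connected negative definite intersection matrix has strictly negative entries, the dichotomy is in fact all-or-nothing (all $a_j=0$ in the Du Val case $\Gamma\subset SU(2)$, all $a_j<0$ otherwise), which confirms your reading.
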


Thanks to the work of \cite{HeinSuvaina} and Theorem \ref{tianyauthm} in Section \ref{TYsection}, in complex dimension two we can also state a converse classificatory  statement (which can be compared to \cite{Suvaina}).

\begin{proposition} A pair of a non-compact K\"ahler surface $X$ and a compact  simple normal crossing divisor $E = \cup_j E_j$ (non necessarily connected) satisfying $K_X = \sum_j a_j E_j$, with $a_j \in (-1,0]$ admits an ALE Calabi-Yau metric with conical singularities along $E$ with cone angles equal to $2\pi(1+a_j)$ if and only if $(X,E)$ is the minimal resolution of a partial smoothing of a quotient singularity $X=\mathbb{C}^2/\Gamma$, and $a_j$ are the discrepancies of such resolution.
	
In particular, if an ALE Calabi-Yau space $(X,E,\omega)$ with conical singularities along $E$ is biholomorphic outside of a compact set to the infinity of $ \mathbb{C}^2/\Gamma$, then $(X,E)$ must the minimal resolution of $ \mathbb{C}^2/\Gamma$.
	
\end{proposition}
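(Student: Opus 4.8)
\emph{Sufficiency.} I would prove the two implications separately, since existence and classification call for different tools. Suppose first that $(X,E)$ is the minimal resolution $\pi\colon X\to Y$ of a partial smoothing $Y$ of $\mathbb{C}^2/\Gamma$, the $a_j$ being its discrepancies. Two-dimensional klt singularities are exactly quotient singularities and this class is preserved under $\mathbb{Q}$-Gorenstein deformation, so $Y$ is a normal affine surface with quotient singularities, asymptotically conical with link $S^3/\Gamma$. The adjunction computation of Section \ref{surfsect} (negativity of the intersection matrix and non-positivity of the entries of its inverse) gives $a_j\in(-1,0]$. Hence $(X,E)$ satisfies the hypotheses of the asymptotically conical Tian-Yau theorem (Theorem \ref{tianyauthm}), which produces an ALE Calabi-Yau metric with cone angle $2\pi(1+a_j)$ along each $E_j$; when $Y=\mathbb{C}^2/\Gamma$ this is already contained in Theorem \ref{theorem} and Corollary \ref{cor1}.

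\emph{Necessity.} Conversely, let $(X,E,\omega)$ be an ALE Calabi-Yau surface with conical singularities along $E$ and $K_X=\sum_j a_j E_j$, $a_j\in(-1,0]$. The ALE hypothesis fixes a finite $\Gamma\subset U(2)$ acting freely on $S^3$ with $\omega$ asymptotic to the flat cone $\mathbb{C}^2/\Gamma$, and since $E$ is compact the entire end of $X$ is smooth and asymptotically flat K\"ahler. The crucial step --- the one I expect to be the main obstacle --- is to promote this metric asymptotic to complex-analytic data: by $\bar\partial$-estimates on the smooth end one constructs enough holomorphic functions of polynomial growth to identify a neighbourhood of infinity in $X$ with that of an affine variety $Y$, thereby realizing $X$ as a resolution $\pi\colon X\to Y$ contracting $E$. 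That $Y$ is a partial smoothing of $\mathbb{C}^2/\Gamma$ then follows from the deformation theory of quotient surface singularities, $Y$ being a Stein surface with quotient singularities and the same link $S^3/\Gamma$ as the cone. This algebraization at infinity and identification of $Y$ is precisely the complex-geometric input imported from \cite{HeinSuvaina} (compare \cite{Suvaina}), which is insensitive to the conical locus since the latter is compact.

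\emph{Minimality and conclusion.} It then remains to match $\pi$ with the minimal resolution and its discrepancies with the prescribed $a_j$. Here I would use that the minimal resolution of $Y$ is the unique one with $K_X$ nef relative to $Y$, that is $K_X\cdot E_j=-2-E_j^2\ge 0$ on every (rational) exceptional curve, and that the relations $K_X\cdot E_i=\sum_j a_j(E_i\cdot E_j)$ with $(E_i\cdot E_j)$ negative definite recover the $a_j$ from Section \ref{surfsect}; verifying that the canonical complex model furnished by the classification carries no superfluous $(-1)$-curve is the delicate part of this step. Finally, for the closing statement, the extra hypothesis that $X$ is biholomorphic outside a compact set to the infinity of $\mathbb{C}^2/\Gamma$ forces the ring of holomorphic functions of polynomial growth on $X$ to coincide with that of $\mathbb{C}^2/\Gamma$, so $Y\cong\mathbb{C}^2/\Gamma$ and the partial smoothing is trivial; thus $(X,E)$ is the minimal resolution of $\mathbb{C}^2/\Gamma$.
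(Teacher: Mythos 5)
Your overall route coincides with the paper's: existence via Theorem \ref{tianyauthm}, and for the converse the structural input from \cite{HeinSuvaina} followed by a minimality argument. One stylistic difference on the converse: rather than arguing that the $\bar\partial$-techniques of \cite{HeinSuvaina} are ``insensitive to the conical locus'', the paper makes a cleaner reduction --- since the conical ALE K\"ahler form is cohomologous to a \emph{smooth} ALE K\"ahler form (compare the form $\eta$ of Section \ref{refmetsect}), one can apply \cite{HeinSuvaina} verbatim to conclude that $X$ is a resolution of a partial smoothing of a quotient singularity. You should make that reduction explicit instead of re-opening the internals of the cited theorem.

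The genuine gap is the minimality step, which you explicitly set aside as ``the delicate part'' and never carry out: nothing in your text connects the numerical hypothesis $a_j\in(-1,0]$ to the absence of $(-1)$-curves in the exceptional set, and this is the only substantive point of the converse beyond citing \cite{HeinSuvaina}. The paper's argument here is purely numerical: it asserts that if the resolution were not minimal, one of the uniquely determined coefficients in $K_X=\sum_j a_j E_j$ would fall outside the allowed range. The computation behind this is adjunction: a $(-1)$-curve $F=E_{j_0}$ in the exceptional set has $K_X\cdot F=-2-F^2=-1$, while $K_X\cdot F=-a_{j_0}+\sum_{j\neq j_0}a_j\,(E_j\cdot F)$ with $-a_{j_0}\in[0,1)$ and each remaining term non-positive. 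This immediately excludes $(-1)$-curves meeting the rest of $E$ in at most one point (there the right-hand side exceeds $-1$); for a $(-1)$-curve created by blowing up a node of $E$ one must in addition track discrepancies through the factorization of $X\to Y$ over the minimal resolution, and this bookkeeping is exactly the step you cannot replace by an appeal to ``the canonical complex model furnished by the classification''. Your characterization of the minimal resolution by relative nefness of $K_X$ is correct but unused: you never show that the given $(X,E)$ satisfies $K_X\cdot E_j\ge 0$, which is what would close the argument. As written, your proof establishes everything except the one implication that actually uses the hypothesis $a_j\in(-1,0]$.
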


\begin{proof} For the existence of an ALE CY metric in any  K\"ahler class, see Theorem \ref{tianyauthm} in Section \ref{TYsection}. For the converse, note that given an ALE Calabi-Yau metric with conical singularities along $E$, we can always find a smooth ALE K\"ahler  form cohomologous to the singular one. By \cite{HeinSuvaina} we  know that $X$ must be a resolution of a partial smoothing of a quotient singularity. We denote by \(E\) the exceptional divisor (possibly non-connected). It decomposes into irreducible linearly independent rational components \(\cup_j E_j\) and there are uniquely determined rational numbers \(a_j\) such that $K_X = \sum_j a_j E_j$. Our hypothesis then imply that the resolution must be minimal, since otherwise at least one of the discrepancies would be \(-1\). 
\end{proof}

Note that, by the theory of classification of minimal resolution of two dimensional quotient singularities (see \cite[Chapter 7]{Ishii}), the connected components of $E$ must then necessarily be given by simple normal crossing  projective lines  with contractible dual complexes. A prototypical example would be the Hirzebruch-Jung minimal resolutions of cyclic quotients whose exceptional divisor is made up of a chain of projective lines, we discuss these with more detail in Section \ref{toricsect}.

\emph{\(L^2\)-norm of the curvature tensor.}
A combination of the formula for the energy of ALE gravitational instantons \cite{AtiyahLeBrun} together with the Chern-Weil formulas for conically singular metrics \cite{SongWang} (see also \cite[Equation 8.6]{Tian}), gives us that the energy \( E = (1/8 \pi^2) \int_X |\mbox{Riem}|^2 \) of the metrics in Corollary \ref{cor1}
is given by the following formula:

\begin{equation} \label{energyformula}
E = \chi (X) + \sum_j  (\beta_j-1) \chi (E^{\times}_j) + \sum_{x \in S} (\nu_x -1) - \frac{1}{|\Gamma|} ,
\end{equation}
where \(E_j^{\times} = E_j \setminus S \), \(S\) is the finite set of normal crossing points of \(E\) and \( \nu_x = \beta_{j_1} \beta_{j_2} \) whenever \( \{x\} = E_{j_1} \cap E_{j_2} \).

\emph{Sketch of proof:}
We apply the Gauss-Bonnet-Chern theorem to the smooth metric on the compact \(4\)-manifold with boundary \( X_{\epsilon, R} := (X \cap B(p, R)) \setminus U_{\epsilon} \), where \( p \in X \) is a fixed base point, \( R >> 1 \) and \( U_{\epsilon} \) is an  \(\epsilon\)-tubular neighbourhood of \(E\) with \( 0 < \epsilon << 1 \). We obtain
\[ E (g|_{X_{\epsilon, R}}) + \int_{\partial B(p, R)} sm(II) - \int_{\partial U_{\epsilon}} sm(II) = \chi (X \setminus E) , \]
where \( sm(II) \) is some universal function that depends on the second fundamental form of the corresponding boundary.
We know that \( \lim_{R \to \infty} \int_{\partial B(o, R)} sm(II) = 1 / |\Gamma | \) (see \cite{AtiyahLeBrun}) and \( \lim_{\epsilon \to 0} \int_{\partial U_{\epsilon}} sm(II) = \sum_j \beta_j \chi (E_j^{\times}) + \sum_{x \in S} \nu_x  \) (see \cite{SongWang} for the case of a smooth divisor\footnote{To our knowledge this last limit boundary integral \(\lim_{\epsilon \to 0} \int_{\partial U_{\epsilon}} sm(II) = \sum_j \beta_j \chi (E_j^{\times}) + \sum_{x \in S} \nu_x  \) hasn't been proved yet. The argument in \cite{SongWang} relies on the poly-homogeneous expansion of the conical KE metric (\cite{JMR}) and therefore it restricts to the smooth divisor case.}). We deduce equation \eqref{energyformula} from the additivity of the Euler characteristic.

\subsection{Toric resolutions} \label{toricsect}
Cyclic quotient singularities are toric varieties whose fan is made up of a single simplicial cone. Toric resolutions correspond to subdivisions into primitive cones, the exceptional divisors are given  the rays of the subdivision lying on the interior of the cone and the discrepancies are easily computed from this combinatorial data. In this section we give examples of these toric resolutions for which the discrepancies are negative, so that Theorem \ref{theorem} applies. By uniqueness, our Ricci-flat metrics must also be toric.
We quote Oda's book \cite{Oda} as a general reference for this section.

Let \( r \geq 2 \) and \( 1 \leq a_i \leq r-1 \) for \( 2 \leq i \leq n \). Denote by \( \frac{1}{r} (1, a_2, \ldots, a_n) \) the action of the cyclic group of order \(r\) on \( \mathbb{C}^n \) with its generator  \( \xi = e^{2\pi i /r} \) acting by
\begin{equation}
	\xi \cdot (z_1, \ldots, z_n) = ( \xi z_1, \xi^{a_2} z_2, \ldots, \xi^{a_n} z_n ) .
\end{equation}
The action is free on the unit sphere provided that \( \mbox{g.c.d.}(a_i, r) =1 \) for all \(i\). We denote by \( \mathbb{C}^n / \mu_r \) the corresponding isolated quotient singularity.

The singularity \( \mathbb{C}^n / \mu_r \) is presented, as a toric variety, by the fan made of the single simplicial cone \( \sigma \) spanned by the vectors \( v, e_2, \ldots e_n \) where \(e_1, \ldots, e_n\) is the standard basis of \( \mathbb{R}^n \) and 
\[ v = r e_1 + \sum_{i=2}^{n} (r-a_i)e_i . \]
A smooth resolution \(X\) is given by a fan subdivision of \( \sigma \) into simplicial sub-cones spanned by bases of the integer lattice \( \mathbb{Z}^n \). The exceptional divisors \(E_j \subset X\) correspond to the primitive integer vectors \(w_j\) spanning the rays of the subdivision  that lie on the interior of \(\sigma\). The vector
\[ \gamma = \left( \frac{1 + \sum_{i=2}^{n} (a_i -r)}{r}, 1, \ldots, 1  \right) \in \mathbb{Q}^n \]
is uniquely determined by the property that \( \langle v, \gamma \rangle = \langle e_2, \gamma \rangle = \ldots = \langle e_n, \gamma \rangle =1  \). The Gorenstein index of the singularity is the minimum of \( \ell \in \mathbb{Z}_{\geq 1} \) such that \( \ell \gamma \in \mathbb{Z}^n \). The discrepancies of the resolution, which we write as \( K_X = \sum_j a_j E_j \) with \(a_j = \beta_j -1\), can be easily computed by means of the formula \( \beta_j = \langle w_j, \gamma \rangle \). In other words \(\beta_j\) is the sum of the coefficients of \(w_j\) with respect to the basis \(\{  v, e_2, \ldots e_n \}\).

A set of \( \mathbb{R} \)-linearly independent vectors \( \{ v_1, \ldots, v_n \} \subset \mathbb{Z}^n \) is a basis of the integer lattice if and only if they span a unit volume parallelepiped, that is \( |\det (v_1, \ldots, v_n) | =1  \). 
An integer vector \(w = \sum_{i=1}^{n} \lambda_i v_i \) lies in the interior of the cone spanned by \( \{ v_1, \ldots, v_n \} \) if and only if \(\lambda_i >0 \) for all \(i\). It gives a subdivision into \(n\) simplicial sub-cones spanned by  \( \{ w, v_1, \ldots, \hat{v}_i, \ldots v_n \} \) and \(|\det (w, v_1, \ldots, \hat{v}_i, \ldots v_n )| = \lambda_i  |\det (v_1, \ldots, v_n) | \). Note that \( | \det (v, e_2, \ldots, e_n) | = r \)

\emph{Calabi's metrics.} If \( a_2 = \ldots = a_n =1 \), so \( v = (r, r-1, \ldots, r-1) \) and \( \gamma = ( (n/r) +1 -n, 1, \ldots, 1 ) \). The vector \( w = (1, \ldots, 1) = (1/r) v + \sum_{j=2}^{n} (1/r) e_j \) gives a smooth resolution \(X\) with \( E \cong \mathbb{CP}^{n-1} \) and \( \beta = n/r \). In fact, \(X\) is the total space of \( \mathcal{O}_{\mathbb{CP}^{n-1}} (-r) \), when \(r=n\) this is the canonical bundle of \(\mathbb{CP}^{n-1}\) which admits a smooth cohomogeneity one ALE Calabi-Yau metric via Calabi's ansatz, it pulls-back to a metric with cone angle \( 2 \pi n \) along the zero section of \( \mathcal{O}_{\mathbb{CP}^{n-1}} (-1) \) and then it pushes forward with cone angle \( 2 \pi (n/r) \) along the zero section of \( \mathcal{O}_{\mathbb{CP}^{n-1}} (-r) \).

\emph{Hirzebruch-Jung resolutions.} The cyclic quotient singularities \(\frac{1}{r} (1, a) \) are represented by the cone \(\sigma\) spanned by \( \{ (0, 1), (r, r-a) \}\). These singularities  admit a unique minimal resolution, which corresponds to the subdivision of \(\sigma\)  given by the rays going through  the vertices \( w_1, \ldots, w_k \) of the boundary of the convex hull of \( \mathbb{Z}^2 \cap \sigma \). The exceptional divisor of the minimal resolution \(X\) is made up of a chain \(E_1, \ldots, E_k\) of \(\mathbb{P}^1\)'s with self intersection \( -b_j \) where \(b_j \geq 2 \) are given by the continuous fraction expansion
\[ \frac{r}{a} = b_1 - \frac{1} {  b_2 - \frac{1}{1- \ldots }} . \] 
The discrepancies \( K_X = \sum_{j=1}^{k} (\beta_j -1) E_j \) are given by the formula \( \beta_j = \langle w_j, \gamma \rangle \) with \( \gamma = (\frac{1+a}{r}-1, 1) \). The extreme cases are \( a=1 \) and \(a=r-1\). In the first case \(X\) is the total space of \( \mathcal{O}(-r) \) endowed with a quotient of the Calabi (also known as Eguchi-Hanson in this case) metric of angle \( 4 \pi /r \) along the zero section as before; in the second case \(b_j =2\) for all \(j\) and \(X\) is a smooth \(A_{n-1}\) ALE gravitational instanton. The new interesting cases arise when \( 2 \leq a \leq n-2 \). For example, for the \( \frac{1}{7} (1, 3) \) singularity the exceptional divisor is a chain of three projective lines \( E_1, E_2, E_3 \) with self intersections \( -3, -2, -2 \). It is easy to check that \( \gamma  = (-3/4, 1) \), \(w_1 = (1, 1) \), \(w_2 = (3, 2) \), \(w_3 = (5, 3) \) and the respective values of \(\beta\) are \( 4/7, 5/7, 6/7 \). The curvature of our Ricci-flat metrics given by Theorem \ref{theorem} is expected to blow-up along the components \(E_j\) of the exceptional divisor with \(\beta_j >1/2\) where the tangent bundle \( TX|_{E_j} \cong \underline{ \mathbb{C} }^2 \) does not split holomorphically as \( TE_j \oplus N_{E_j} \), as happens with \(E_1\).

\emph{Three complex dimensions.} There is not a notion of a `preferred' or unique minimal resolution if \( \dim_{\mathbb{C}} \geq 3 \), however we are able to produce smooth resolutions of the desired type. Let \( \mathbb{C}^3 / \mu_r \) be the quotient singularity \( \frac{1}{r} (1, 1, a) \), so \( \sigma \) is the cone spanned by \(  \{  v = (r, r-1, r-a), e_2, e_3 \} \) and \( \gamma = (\frac{2+a}{r}-2, 1, 1) \). Consider the subdivision into three simplicial cones determined by \( w_1 := (1, 1, 1) = (1/r) v + (1/r) e_2 + (a/r) e_3 \). We require that the sum of the coefficients \( \langle w_1, \gamma \rangle = (2+ a)/ r < 1 \), that is \(w_1\) lies on the interior of the tetrahedron with vertices at \(0, v, e_2, e_3\). The only simplicial cone of the subdivision which is not smooth is the one spanned by \( \{ v, e_2, w_1  \} \), as \( | \det (v, e_2, w_1) | = a \). We can further divide this cone into three simplicial cones spanned by \emph{bases} of the integer lattice provided that \( w_2 := (1/a) v + (1/a) e_2 + (1/a) w_1 = ( \frac{r+1}{a}, \frac{r+1}{a}, \frac{r+1}{a} -1 ) \in \mathbb{Z}^3 \), which happens exactly when \(a\) divides \(r+1\). Similarly, we require that the sum of the coefficients \( 3/a \leq 1 \) which implies \( \langle w_2, \gamma \rangle = \frac{2(r+1) + a}{a r} < 1 \) as \(r > 5\). As a result we get that if \( a \geq 3 \), \( r > a +2 \) and \(a\) divides \(r+1\) then the singularity \( \frac{1}{r} (1, 1, a) \) admits a smooth resolution, displayed in Figure \ref{fig:resol}, with exceptional divisor \( E = E_1 \cup E_2\) with \(E_1 \cong \mbox{Bl}_1 \mathbb{CP}^2 \)  and \(E_2 \cong \mathbb{CP}^2\) intersecting transversely along a rational curve; the angles are \( \beta_1 = \frac{2+a}{r} \) and \( \beta_2 = \frac{2(r+1) + a}{a r}\). For example, if we take \( r=7 \) and \( a = 4 \) then \( \beta_1 = \frac{6}{7} \) and \( \beta_2 = \frac{5}{7} \).

\begin{figure} 
	\centering
	\begin{tikzpicture} [scale=.7]
	\draw (-4,0) to (0,4);
	\draw (0,4) to (8,-4);
	\draw (8,-4) to (-4, 0);
	\draw [dashed, blue] (-4, 0) to (0, 2) ;
	\draw [dashed, blue] (0, 2) to (0, 4) ;
	\draw [dashed, blue] (0, 2) to (8, -4) ;
	\draw [dashed, red] (1, -.3) to (0, 2) ;
	\draw [dashed, red] (1, -.3) to (-4, 0) ;
	\draw [dashed, red] (1, -.3) to (8, -4) ;
	\draw [dotted, black] (-.8, 1.2) to (0, 4) ;
	\draw [dotted, black] (-.8, 1.2) to (-4, 0) ;
	\draw [dotted, black] (-.8, 1.2) to (8, -4) ;
	\draw [dotted, black] (-.8, 1.2) to (0, 2) ;
	\draw [dotted, black] (-.8, 1.2) to (1, -.3) ;

	
	\draw (0,2) node [shape=circle,draw,fill=blue,scale=.5] (asd) {};
	\draw (1,-.3) node [shape=circle,draw,fill=red,scale=.5] (asd) {};
	\draw (-.8, 1.2) node [shape=circle,draw,fill=black,scale=.5] (asd) {};
	\draw  (-4,-.5) node (asd) [scale=1] {\(e_2\)};
	\draw  (.5, 4.1) node (asd) [scale=1] {\(e_3\)};
	\draw  (8.2,-3.8) node (asd) [scale=1] {\(v\)};
	\draw  (.4, 2) node (asd) [scale=1] {\(w_1\)};
	\draw  (1.4, -.3) node (asd) [scale=1] {\(w_2\)};
	\draw  (-.8, .8) node (asd) [scale=1] {\(0\)};

	\end{tikzpicture}
	
	\caption{Fan for the toric resolution of the \( \frac{1}{r} (1, 1, a) \) singularity.}
	\label{fig:resol}
\end{figure}
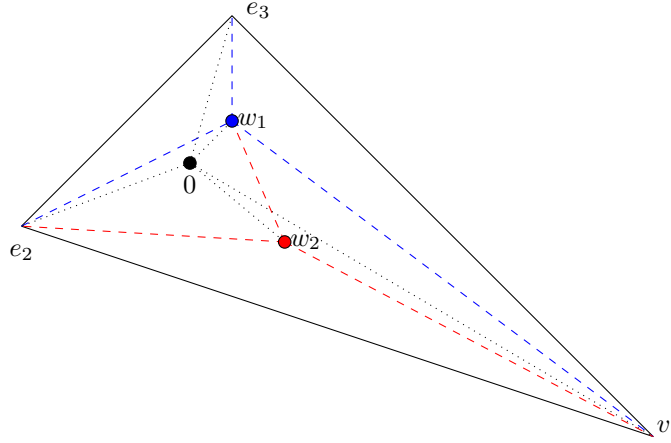

\section{More general existence theorems} \label{gensect}

Joyce's work on ALE metrics has been greatly generalized to the asymptotically conical regime, see \cite{ConlonHein} and references therein. Our adaptation to the setting conical singularities along a \emph{compact} divisor works equally well with these different extensions. In this section we state the more general results obtained by
applying the method in Section \ref{proofsect}. We hope that the corresponding proofs should be clear to the reader without specifying further details apart from the following general lines:

In the settings of Theorems \ref{resolthm} and \ref{tianyauthm} we have a reference metric  \( \omega_{ref} = \eta + \sum_{i=1}^{N}|s_i|_{h_i}^{2\beta_i} \), where \(\eta\) is a smooth asymptotically conical metric and \(\omega_{ref} = \eta \) outside a compact set. Same as before, there is a uniform upper bound for the bisectional curvature of \(\omega_{ref}\) and its Ricci form has a H\"older continuous potential. There is a straightforward definition of weighted H\"older spaces. The Laplacian of our reference metric acts as a Fredholm operator among these spaces, provided the weight does not belong to a discrete set of indicial roots determined by the cone at infinity.  The point for proving this is that the relevant analysis for asymptotically conical metrics `does not interact' with the conical singularities along the \emph{compact} divisor \(E\). Same as in Section \ref{proofsect},
we can perturb the refrence metric to a metric with bounded Ricci curvature and then run Yau's continuity path to get the desired Ricci-flat metric.

\subsection{Resolutions}

Let \( (Y, g_C) \) be a Calabi-Yau cone, so \(Y\) is an affine algebraic variety with and isolated singularity at its apex \(o\) and there is a  multivalued holomorphic volume form \( \Omega_0 \) such that \( \omega_C^n = i^{n^2} \Omega_0 \wedge \overline{\Omega}_0 \); where  \( n = \dim_{\mathbb{C}} X \), \(\omega_C = (i/2) \dd r^2 \) and \(r\) is the distance to the apex.  Let \( \pi: X \to Y \) be a smooth resolution with a simple normal crossing exceptional divisor \( p^{-1}(o) = E = \cup_{j=1}^N E_j \). Assume that there are \( 0 < \beta_j < 1 \) such that
\begin{equation} 
K_X  = \sum_{j=1}^{N} (\beta_j -1) E_j , 
\end{equation}
so the pull-back  \( \Omega = p^* \Omega_0 \)   extends locally over \(E\) with poles of order \( 1- \beta_j\) along \(E_j\). We are interested in metrics asymptotic to \(g_C\) that solve the Ricci-flat equation: \( \omega_{RF}^n = i^{n^2} \Omega \wedge \overline{\Omega} \).

\begin{theorem} \label{resolthm}
	Every positive class in \(H^{2}(X, \mathbb{R})\) admits a unique Ricci-flat K\"ahler metric \(g_{RF}\) with cone angle \(2\pi\beta_j\) along \(E_j\) for \(j= 1, \ldots, N\) with
	\begin{equation}
	| \nabla^k_{g_C} ( (\pi^{-1})^* g_{RF} - g_C ) |_{g_C} = O (r^{\mu - k})
	\end{equation}
	for all \(k \geq 0\). The rate of decay is \( \mu = -2n \) if the cohomology class of the metric belongs to the linear subspace \( H^2_c (X, \mathbb{R}) \subset H^2(X, \mathbb{R})  \) and \(\mu =-2\) otherwise. 
\end{theorem}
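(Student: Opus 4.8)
The plan is to follow verbatim the strategy that proved Theorem \ref{theorem} in Section \ref{proofsect}, replacing the flat cone $\mathbb{C}^n/\Gamma$ by the general Calabi-Yau cone $(Y,g_C)$ throughout. The skeleton is Yau's continuity method: construct a reference metric $\omega_{ref}$ in the given positive class that equals a fixed smooth asymptotically conical (AC) metric $\eta$ outside a compact set and has cone singularities of angle $2\pi\beta_j$ along $E_j$ with $C^{2,\alpha}$ potentials; perturb it to an initial metric $\omega_0$ with bounded Ricci curvature and $\mathrm{Ric}(\omega_0)\equiv 0$ at infinity; then solve $(\omega_0 + i\partial\bar\partial u_t)^n = e^{tf_0}\omega_0^n$ for $t\in[0,1]$. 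The linear theory makes $T$ open, and the four-step a priori estimate (Proposition \ref{AprioriEstimate}) makes it closed, giving existence; uniqueness follows from the maximum principle as usual.

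\textbf{Adapting the ingredients.} First I would establish the analytic package for the AC setting. The Sobolev inequality (Proposition \ref{sobineqprop}) carries over because, after the same angle-unwrapping diffeomorphism near $E$, the pulled-back metric is quasi-isometric to a smooth AC manifold, for which the Sobolev inequality holds (see \cite{ConlonHein}). The crucial input is the replacement of Theorem \ref{LaplaceThm}: the Laplacian $\triangle_{\omega_0}: C^{2,\alpha}_{\nu+2}\to C^{\alpha}_\nu$ is Fredholm, and an isomorphism, precisely when $\nu$ avoids the discrete set of indicial roots of the cone link $(Y\setminus o)/$link at infinity; this is standard AC Fredholm theory (\cite{ConlonHein}, Theorems 2.9 and 2.11). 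As emphasized in Section \ref{gensect}, the interior Schauder and Monge-Amp\`ere estimates of Guo-Song (Theorems \ref{intSchthm} and \ref{IntMongAmp}) are purely local and handle the conical locus along the \emph{compact} divisor $E$, while the weighted analysis at infinity does not interact with that locus. With these two pieces in hand, the $C^0$, $C^2$ (via Chern-Lu and the bisectional curvature upper bound of $\omega_{ref}$), $C^{2,\alpha}$, and weighted estimates all transcribe directly from the proof of Proposition \ref{AprioriEstimate}.

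\textbf{The decay dichotomy.} The genuinely new content is the claim that $\mu=-2n$ when the class lies in the image of $H^2_c(X,\mathbb{R})\hookrightarrow H^2(X,\mathbb{R})$ and $\mu=-2$ otherwise. Here $H^2_c\cong H^2$ fails in general (unlike the flat case treated in Section \ref{proofsect}), so the two situations must be separated. The plan is: in both cases, bootstrapping and the second item of Theorem \ref{LaplaceThm} (adapted to the AC indicial roots) yield an expansion of the solution at infinity whose leading term is governed by $\int_X(1-e^{f_0})\,dV_{g_0}$, which is a multiple of the obstruction coming from the difference between the reference class and $[\eta]$. When the K\"ahler class is compactly supported one may choose $\eta$ so that $\omega_{ref}$ and hence $\omega_0$ already equal the exact AC model at infinity, forcing the slow term to vanish and leaving the fast rate $\mu=-2n$ dictated by the next indicial root. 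When the class is genuinely non-compactly-supported, the leading deviation from $g_C$ decays like $r^{-2}$, governed by the lowest positive indicial root of an AC cone, which is $-2$.

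\textbf{Main obstacle.} The hard part will be pinning down the exact indicial roots of the general cone $(Y,g_C)$ and verifying that the relevant weights $\mu\in\{-2,-2n\}$, together with the intermediate weights used in the weighted Moser iteration, avoid this discrete set so that Theorem \ref{LaplaceThm} applies in the AC form. Unlike the flat quotient, where the harmonic functions $r^{2-2n}$ and the indicial spectrum are computed explicitly, for a general Calabi-Yau cone these roots depend on the spectrum of the link, and one must invoke the AC literature (\cite{ConlonHein}) to guarantee Fredholmness and to justify that the leading asymptotic term of $\omega_{RF}-\omega_C$ sits at $r^{-2}$ or $r^{-2n}$ accordingly. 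Everything else is a faithful repetition of Section \ref{proofsect}.
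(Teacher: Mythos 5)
Your proposal is correct and follows essentially the same route as the paper, which itself only sketches the argument in Section \ref{gensect}: transplant the continuity method of Section \ref{proofsect} (reference metric $\eta+\delta\sum_j i\partial\bar\partial|s_j|^{2\beta_j}$, bounded-Ricci initial metric, Chern--Lu, local Guo--Song estimates) and replace the flat ALE linear theory by the AC Fredholm theory of \cite{ConlonHein}, \cite{vanCoevering}, \cite{Goto}, with the $\mu=-2n$ versus $\mu=-2$ dichotomy coming from whether the class can be represented by a form equal to $\pi^*\omega_C$ near infinity. The only quibble is your phrasing that the leading term is ``a multiple of the obstruction coming from the difference between the reference class and $[\eta]$'' --- these classes coincide by construction; the $r^{-2}$ term is forced by the non-compactly-supported part of the class at the end, while $\int_X(1-e^{f_0})\,dV_{g_0}$ only controls the coefficient of the $r^{2-2n}$ harmonic term --- but this does not affect the argument.
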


This is a log version of a result for smooth metrics due to van Coevering \cite{vanCoevering} for compactly supported K\"ahler classes and later extended by Goto \cite{Goto} to all positive classes, see also \cite{ConlonHein}. 

\emph{Numerical consequence:}
Let \( \nu_Y \) denote the volume density of the Calabi-Yau cone metric, i.e., the ratio of the volume of the link by that of \(S^{2n-1}(1)\). The number \( 0 < \nu_Y < 1 \) is an algebraic invariant of the singularity \( (Y, o) \), see \cite{Li}.
The Bishop-Gromov volume monotonicity applied to \(g_{RF}\) implies that the volume density of \(g_{RF}\) at any point in \(X\) is greater (strictly in the non-flat case) than that of the tangent cone at infinity. As a consequence, by picking a point on a non-empty intersection \( E_{j_1} \cap \ldots \cap E_{j_d} \neq \emptyset \), we get
\begin{equation} \label{voldenineq}
\beta_{j_1} \cdot \ldots \cdot \beta_{j_d} > \nu_Y .
\end{equation} 
The inequality \ref{voldenineq} has a purely algebraic content and it might as well be derived from purely algebraic methods just from the fact that \(\pi(E)=o\) but we haven't gone into this.

\subsection{Tian-Yau theorem} \label{TYsection}
Let \(\overline{X}\) be a compact K\"ahler orbifold without complex codimension one singularities. Let \( D \supset \mbox{Sing}(\overline{X}) \) be a suborbifold divisor and \( E= \cup_j E_j \subset X:= \overline{X} \setminus D \) a simple normal crossing divisor such that
\begin{equation} \label{TYanglecond}
K_{\overline{X}} = - \alpha D + \sum_j (\beta_j -1) E_j
\end{equation}
for some \( 0 < \beta_j < 1 \) and \( \alpha > 1\). We interpret equation \eqref{TYanglecond} as providing the existence of a (multivalued) canonical section with prescribed poles of order \( \alpha \) along \(D\) and \(1-\beta_j\) along \(E_j\). Assume that \(D\) is a K\"ahler-Einstein Fano orbifold. Let \( \varphi : N_D \to \overline{X} \) be the exponential map of any background Hermitian metric on \(\overline{X}\) and endow \(N_D \setminus \{0\} \) with the Calabi-Yau cone metric \(g_C\) given by the Calabi ansatz. Note that \( (q-p) N_D \cong -p K_D \) by adjunction, where \( \alpha = q/p \). Same as before, the arguments of Section \ref{proofsect} combined with Conlon-Hein's work \cite{ConlonHeinII} give us the following:

\begin{theorem} \label{tianyauthm}
	Every K\"ahler class on \( X \) has a unique Calabi-Yau metric \(g_{RF}\) with cone angles \(2\pi\beta_j\) along \(E_j\) that satisfies
	\begin{equation*}
			| \nabla^k_{g_C} ( \varphi^* g_{RF} - g_C ) |_{g_C} = O (r^{\mu - k})
	\end{equation*}
	for all \( k \geq 0 \) with \( \mu = \min \lbrace 2-\epsilon, n/(\alpha -1) \rbrace \) for any \(\epsilon>0\).
\end{theorem}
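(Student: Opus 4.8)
The plan is to run the continuity-method argument of Section \ref{proofsect} essentially unchanged, replacing the flat cone at infinity by the Calabi-ansatz Calabi-Yau cone $g_C$ on $N_D \setminus \{0\}$ and substituting the asymptotically conical linear theory of Conlon-Hein \cite{ConlonHeinII} for the ALE theory of \cite{Joyce}. First I would fix a positive class and build a reference metric $\omega_{ref} = \eta + \delta \sum_{j} i \dd |s_j|_{h_j}^{2\beta_j}$, where $\eta$ is a smooth asymptotically conical K\"ahler form in the class, chosen so that $\varphi^*\eta = \omega_C$ outside a compact set, $s_j$ is a defining section of $E_j$, and $\delta$ is small. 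As in Lemma \ref{refmetlem} this is a K\"ahler metric with cone angle $2\pi\beta_j$ along $E_j$, asymptotic to $g_C$, with bisectional curvature bounded above. The angle condition \eqref{TYanglecond} guarantees that $\omega_{ref}^n = e^{-f_{ref}} i^{n^2}\Omega \wedge \overline{\Omega}$ with $f_{ref}$ H\"older continuous in the conical sense near $E$ and decaying at the conical end, which reduces the problem to solving $(\omega_{ref} + i \dd u)^n = e^{f_{ref}} \omega_{ref}^n$ in a weighted H\"older space.

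The analytic core is the weighted linear theory. I would define weighted H\"older spaces as in Section \ref{prelimsect}, using the Guo-Song norms of Section \ref{locmodsect} on the finitely many conical charts covering a neighbourhood of the compact set $E$, and the Conlon-Hein weighted norms modelled on $g_C$ on a tubular neighbourhood of $D$ at infinity. The decisive point, emphasised at the start of Section \ref{gensect}, is that these two regimes are spatially separated: $E$ is compact and sits in the core, while the conical end is disjoint from it, so the interior Schauder estimates (Theorems \ref{intSchthm} and \ref{IntMongAmp}) and the asymptotically conical Fredholm theory never interfere. Patching the two shows that $\triangle_{\omega_{ref}}$ is Fredholm between the weighted spaces for every weight avoiding the discrete set of indicial roots of the cone Laplacian on the link of $g_C$, and an isomorphism for a suitable weight. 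Exactly as in Section \ref{initialmetricsect}, I would then perturb $\omega_{ref}$ to an initial metric $\omega_0$ with globally bounded Ricci curvature, and Ricci-flat outside a compact set, by solving one linearized equation.

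With $\omega_0$ in hand I would run the path $(\omega_0 + i \dd u_t)^n = e^{t f_0}\omega_0^n$, openness coming from the linear theory via the implicit function theorem. Closedness rests on an a priori estimate of the shape of Proposition \ref{AprioriEstimate}, carried out in the same four steps: a $C^0$ bound by Moser iteration using the Sobolev inequality, which holds here by the quasi-isometry-to-a-smooth-AC-manifold argument of Proposition \ref{sobineqprop} since asymptotically conical manifolds of maximal volume growth satisfy \eqref{sobineq}; a $C^2$ bound via the Chern-Lu inequality and the maximum principle, using the upper bisectional bound on $\omega_{ref}$ and the lower Ricci bound on $\omega_t$; a $C^{2,\alpha}$ bound from the interior Monge-Amp\`ere estimate (Theorem \ref{IntMongAmp}) near $E$ together with standard interior estimates at infinity; and finally the weighted estimate by weighted Moser iteration and the linear theory, as in \cite[Section 8.5]{Joyce}. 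This closes the path and produces $\omega_{RF} = \omega_0 + i \dd u_1$; uniqueness follows from the maximum principle as usual.

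It remains to identify the decay rate. As in the last paragraph of Section \ref{proofsect} I would write $-\triangle_{g_0} u$ in terms of the lower order terms $(i \dd u)^k$ and apply the mapping properties of the Laplacian on the weighted spaces of the cone $g_C$; the optimal rate is then governed by the smallest relevant indicial root of the Calabi-ansatz cone together with the decay of $f_{ref}$. The Calabi-ansatz subleading term, through the relation $(\alpha-1)N_D \cong -K_D$, contributes the rate $n/(\alpha-1)$, while admitting an arbitrary K\"ahler class rather than only a compactly supported one caps the rate at $2-\epsilon$, yielding $\mu = \min\{2-\epsilon, n/(\alpha-1)\}$. The step I expect to be the main obstacle is exactly this weighted linear theory: proving the Fredholm property on the combined spaces and computing the indicial roots of the Calabi-ansatz cone, since this is the only place where the conical singularities along $E$ and the conical geometry at infinity could interact. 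The reason the argument nonetheless goes through is the compactness of $E$ and its disjointness from the end, which lets one invoke the Guo-Song interior estimates and the Conlon-Hein asymptotically conical theory separately and essentially off the shelf.
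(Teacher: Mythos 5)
Your proposal follows essentially the same route as the paper, which itself only sketches this proof by invoking the general scheme laid out at the start of Section \ref{gensect} (conical reference metric $\eta + \delta\sum_j i\dd|s_j|^{2\beta_j}$ with bounded bisectional curvature, weighted Fredholm theory exploiting that the compact conical divisor does not interact with the asymptotically conical end, perturbation to bounded Ricci curvature, Yau's continuity path) combined with Conlon--Hein \cite{ConlonHeinII} for the asymptotically conical input and the decay rate. Your fleshing out of the four-step a priori estimate and the identification of $\mu$ matches the intended argument.
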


\begin{remark}
	It is explained in \cite{ConlonHein} that there is indeed a one parameter family \(g_{RF, \lambda}\) for \(\lambda>0\) of Calabi-Yau metrics in each K\"ahler class on \(X\) with tangent cone at infinity \( \lambda g_C \). In Theorems \ref{theorem} and \ref{resolthm}, this one parameter family is the pull-back of \(g_{RF}\) by the flow of the lifted Euler vector field \(r\partial_r\), but it remains unclear to what it corresponds in the full generality of Theorem \ref{tianyauthm}.
\end{remark}

\begin{remark}
	The uniqueness in Theorems \ref{theorem}, \ref{resolthm} and \ref{tianyauthm} only requires asymptotic decay of the metric for some \( \mu <0 \). This is a consequence of a Liouville type result for Calabi-Yau cones, as explained in \cite{ConlonHein}.
\end{remark}

\begin{remark}
	In the settings of Theorems \ref{theorem}, \ref{resolthm} and \ref{tianyauthm} there is a numerical criteria, by looking at its pairing with compact analytic subsets, for determining when a class on \(X\) is positive. See \cite{CollinsTosatti} and \cite[Section 1.3.5]{ConlonHein}.
\end{remark}

As a corollary of Theorem \ref{tianyauthm} we cover the case of minimal resolutions of partial \(\mathbb{Q}\)-Gorenstein deformations of \emph{any} quotient singularity \( \mathbb{C}^2 / \Gamma \), as explained in Section \ref{surfsect}. More concretely, we can give examples of this kind via  a quotient construction of ALE gravitational instantons of type \(A_k\):

\begin{example}
	Let \((Z, g_Z)\) be the \(A_3\)-ALE gravitational instanton obtained by applying the Gibbons-Hawking ansatz to the harmonic function \( f (x) = \sum_{i=1}^{4} \frac{1}{2|x-p_i|} \), where \( x= (\xi, s) \in \mathbb{C} \times \mathbb{R} \cong \mathbb{R}^3 \) and \(p_1 = (0, -1) \), \( p_2 = (0, 1) \), \(p_3 =(1, 0)\) and \(p_4 = (-1, 0) \). We endow \(Z\) with the complex structure given by the \(s\)-axis, so that \(Z\) is biholomorphic to the blow-up at the origin of the surface \( \{zw = t^2(t-1)(t+1) \} \subset \mathbb{C}^3  \). The \(180^o\) rotation on \(\mathbb{R}^3\) around the \(s\)-axis has a lift to an holomorphic isometric involution \(F\) of \(Z\) that fixes point-wise the holomorphic \(\mathbb{CP}^1\) corresponding to the segment from \(p_1\) to \(p_2\). The space \( X = Z / (F) \) is the minimal resolution of a partial smoothing of a cyclic \(\frac{1}{8}(1, 1)\)-singularity with one cyclic \(\frac{1}{4}(1, 1)\)-singularity. It is  endowed with an ALE metric, asymptotic \(\mathbb{C}^2 / (\frac{1}{8}(1, 1) ) \), with cone angle \(\pi\) along the projective line resolving the \(\frac{1}{4}(1, 1)\)-singularity.
\end{example}

\subsubsection*{Further extensions}
In Theorem \ref{tianyauthm} we have assumed that \( E \cap D = \emptyset \) this implies linearly independence of the divisors in the right hand side of equation \eqref{TYanglecond} and the cone angles are therefore fixed. It is much more challenging to extend to the case where the cone singularities spread to infinity, that is \( E \cap D \neq \emptyset \).

In the case where there is \emph{no} linearly independence on the right hand side of equation \eqref{TYanglecond}, we can vary the cone angles along \(E\). So far, the only case considered in the literature is \( \overline{X} = \mathbb{CP}^2 \), \(D\) a linear rational curve and \(E\) a smooth degree \(d\) curve that cuts \(D\) at \(d\) distinct points, see \cite{deBorbon}. Equation \eqref{TYanglecond} gives us \( -3 = (\beta-1) d + \alpha \), so \( \alpha< -1 \) as long as \( \frac{d-2}{d} < \beta < 1 \) with \( \alpha \to -1 \) as \( \beta \to \frac{d-2}{d} \). There should be a still a conical Tian-Yau  type metric when \(\alpha=-1\) and \( \beta= (d-2)/d \) (which then close up to Calabi-Yau metrics on \(\mathbb{CP}^2\) for \(\alpha> -1 \) and \(\beta < (d-2)/d\)). As we vary the angles we connect (up to covers) gravitational instantons of different type. For example, if we let \(d=3\) then for \(\beta=1/2\) the metric in \cite{deBorbon} is obtained from a branched double cover of \(\mathbb{C}^2\) by an ALE \(D_4\)-gravitational instanton branching along cubic. As we let \( \beta \to 1/3 \), it converges to a metric which is triple covered by
a gravitational instanton of the type constructed by Hein \cite{Hein}.

\section{Plausible applications} \label{contextsect}
Our theorems give families of conically singular Ricci-flat metrics asymptotic to Calabi-Yau cones, parametrized by the cohomology class of the K\"ahler form. As the cohomology class goes to zero the metrics converge to their asymptotic cone at infinity, contracting the conical divisors to the apex. It is natural to expect to reverse this picture, and find
our spaces as blow-up limits of sequences of conically singular metrics where the conical divisors are contracted to develop a (klt) singularity in the limit with no conical divisors going through. This situation arises in recent work of Li-Tian-Wang \cite{LiTianWang}, where log-resolutions with non-positive discrepancies of the type we have considered are referred as \emph{admisible resolutions}.
A similar situation also occurs in Ricci flow, see \cite{Edwards}. The bubble analogues in this case should be Ricci solitons conically singular along a set of compact divisors.

From another direction, we can consider potential applications to the constructions of \emph{constant scalar curvature} K\"ahler metrics with cone sigularities along simple normal crossing divisors. 
We expect that our ALE Calabi-Yau metrics could be deformed by changing the cone angles to ALE Scalar flat metrics with cone singularities on the exceptional set. It would be moreover interesting to see if, through the above deformations, one can connect our conical Calabi-Yau metric to \emph{smooth} ALE scalar flat metric by taking the values of cone angles tending to one. For example, as a toy case, the conical  CY metric on the total space of \( \mathcal{O}_{\mathbb{CP}^{1}} (-r) \) should be connected by varying the cone angle to the negative mass ALE  scalar flat constructed by LeBrun \cite{LeBrun}. Note that here the ADM mass is monotone decreasing as the cone angle increase, and equal to zero precisely for the Ricci-flat angle. The mass  is given by Hein-LeBrun \cite{HeinLeBrun} by adding the cone angle current term to \(\int_X Scal\).  Finally, one can consider to use our log ALE Calabi-Yau metrics to construct examples of compact log cscK metrics by an Arezzo-Pacard type gluing resolving cscK orbifolds. For both compact and non-compact constructions described above, it will be essential to perform the analysis of the Lichnerowicz operator in the simple normal crossing conical setting (see \cite{KellerZheng} and \cite{Zheng} for the case of smooth divisors).

\bibliographystyle{amsalpha}
\bibliography{noncrepant}

\end{document}